\theoremstyle{plain}
\newtheorem{theorem}{Theorem}[section]
\newtheorem{lemma}[theorem]{Lemma}
\newtheorem{corollary}[theorem]{Corollary}
\newtheorem{proposition}[theorem]{Proposition}
\theoremstyle{definition}
\theoremstyle{remark}
\newtheorem{remark}[theorem]{Remark}
\newcommand{\cF}{\mathcal{F}}
\newcommand{\cQ}{\mathcal{Q}}
\newcommand{\E}{\mathbb E\,}
\newcommand{\R}{\mathbb{R}}
\newcommand{\N}{\mathbb{N}}
\renewcommand{\P}{\mathbb{P}}
\newcommand{\bd}{\mathop{\mathrm{bd}}\nolimits}
\newcommand{\conv}{{\mathrm{conv}}}
\newcommand{\aff}{\mathop{\mathrm{aff}}\nolimits}
\newcommand{\dist}{\mathop{\mathrm{dist}}\nolimits}
\newcommand{\Span}{\mathop{\mathrm{span}}\nolimits}
\newcommand{\eqdistr}{\stackrel{d}{=}}
\newcommand{\ind}{\mathbbm{1}}
\newcommand{\bx}{\mathbf{x}}
\newcommand{\by}{\mathbf{y}}
\newcommand{\bu}{\mathbf{u}}
\newcommand{\dd}{{\,\rm d}}
\newcommand{\given}{\,|\,}
\newcommand{\bgiven}{\,\big|\,}
\newcommand{\BS}{\mathbf{S}}
\renewcommand{\cQ}{\mathcal{C}}
\begin{document}

\author[J.~Randon-Furling]{Julien Randon-Furling}
\address{Julien Randon-Furling, Universit\'e Paris 1 Panth\'eon Sorbonne, SAMM -- CNRS, FP2M (CNRS FR2036), 90 rue de Tolbiac, 75013 Paris, France \&~Columbia University, Department of Mathematics, 2990 Broadway, New York, NY 10027, USA}
\email{Julien.Randon-Furling@cantab.net}

\author[D.~Zaporozhets]{Dmitry Zaporozhets}
\address{Dmitry Zaporozhets, St.~Petersburg Department of the Steklov Institute of Mathematics, Fontanka~27, 191011 St.~Petersburg, Russia}
\email{zap1979@gmail.com}

\title[Convex hulls of several random walks]{Convex hulls of several\\ multidimensional  Gaussian random walks}
\keywords{Average number of facets, Blaschke-Petkantschin formula, Sparre Andersen theorem, convex hull, expected volume, facet probability, Gaussian vectors, persistence probability, random polytope, random walk}
\subjclass[2010]{Primary, 52A22, 60D05, 60G50; secondary, 51M20,  52B05, 52B11, 60G15}

\begin{abstract}
We derive explicit formulae for the expected volume and the expected number of facets of the convex hull of several multidimensional Gaussian random walks in terms of the  Gaussian persistence probabilities. Special cases include the already known results about the convex hull of a single Gaussian random walk and the $d$-dimensional Gaussian polytope with or without the origin.
\end{abstract}

\maketitle

\section{Introduction}
\subsection{Random walk in $\R^1$}
Consider a one-dimensional random walk
\begin{align}\label{1648}
    S_i=X_1+\dots+X_i,\quad i=1,\dots,n,
\end{align}
where  $X_1,\dots, X_n$ are i.i.d. random variables. The classical result of Sparre Andersen~\cite{eSA49} states that if the steps are symmetrically and absolutely continuously distributed, then the probability for the random walk to stay positive (\emph{the persistence probability}) is distribution-free and given by
\begin{align}\label{1141}
    \P[S_1\geq 0,\dots S_n\geq0]=\frac{(2n-1)!!}{(2n)!!}.
\end{align}

Another very well-known result, also due to Sparre Andersen~\cite{eSA54}, under the same assumptions calculates the distribution of the random walk's maximum position (\emph{the discrete arcsine law}): for $i=0,\dots,n$,
\begin{align}\label{1320}
    \P[\max(S_0,\dots,S_n)=S_i]=\frac{(2i-1)!!}{(2i)!!}\frac{(2n-2i-1)!!}{(2n-2i)!!}.
\end{align}
Summing up over $i$ gives the following version of the Chu--Vandermonde identity:
\begin{align}\label{2011}
    \sum_{i=0}^n\frac{(2i-1)!!}{(2i)!!}\frac{(2n-2i-1)!!}{(2n-2i)!!}=1.
\end{align}

\subsection{Random walk in $\R^d$.}
Because of the symmetry, Equations~\eqref{1141} and~\eqref{1320} are equivalent to
\begin{align}\label{1327}
    \P[0\not\in \conv(S_1,\dots S_n)]=2\,\frac{(2n-1)!!}{(2n)!!}
\end{align}
and
\begin{align}\label{1331}
    \P[S_i\,\,\text{is an edge of}\,\,\conv(S_0,\dots,S_n)]=2\,\frac{(2i-1)!!}{(2i)!!}\frac{(2n-2i-1)!!}{(2n-2i)!!},
\end{align}
where in the one-dimensional case 
$$
\conv(S_1,\dots S_n)=\left\lbrace\min(S_1,\dots,S_n), \max(S_1,\dots,S_n)\right\rbrace,
$$
and an ``edge'' reduces to a point (either $\min$ or $\max$).
In this form, the formulae can be naturally generalized to higher dimensions. Again let $S_0, S_1,\dots,S_n$ be  a  random walk  defined as in~\eqref{1648}, where  the steps  are \emph{$d$-dimensional } i.i.d. random vectors now:
\begin{align*}
    X_1,\dots,X_n\in\R^d.
\end{align*}
As in the one-dimensional case, we assume that they are symmetrically and absolutely continuously distributed. Generalizing~\eqref{1327} it was shown in~\cite{KVZ17a} that

\begin{equation} \label{125}
\P[0 \notin \conv(S_1,S_2,\ldots,S_n)] = 2\,\frac{P^{(n)}_{d-1}+P^{(n)}_{d-3}+\dots}{(2n)!!},
\end{equation}
where $P^{(n)}_{j}$ are the coefficients of the polynomial
$$
(t+1)(t+3)\ldots (t+2n-1) = \sum_{j=0}^n P^{(n)}_{j} t^j.
$$
The left-hand side of~\eqref{125} is often referred to as \emph{the non-absorption probability}.

Equation~\eqref{1331} was generalized in~\cite{VZ18} as follows: let $0\leq i_1<\dots<i_d\leq n$ be any indices. Then,
\begin{align*}
    \P[\conv(S_{i_1},\dots,S_{i_d})\,\,&\text{is a facet of}\,\,\conv(S_0,\dots,S_n)]
    \\
    &=2\,\frac{(2i_1-1)!!}{(2i_1)!!}\frac{(2n-2i_d-1)!!}{(2n-2i_d)!!}\prod_{j=1}^{d-1}\frac{1}{i_{j+1}-i_j}.
\end{align*}
This expression  is naturally called \emph{the facet probability}.  Later, in~\cite{KVZ17b} this result was extended  to faces of any dimension as follows. For  $k=0,1,\dots,d-1$, denote by $\cF_k(\cdot)$ the set of $k$-dimensional faces of a polytope. Then for any indices 
\begin{align}\label{2012}
   0 \leq i_1<\dots<i_{k+1}\leq n
\end{align}
we have
\begin{align*}
    \P[\conv(S_{i_1},\dots,&S_{i_{k+1}})\in\cF_k(\conv(S_0,\dots,S_n))]
    \\
    &=2\,\frac{P^{(n,i_1,\dots,i_{k+1})}_{d-k-1}+P^{(n,i_1,\dots,i_{k+1})}_{d-k-3}+\dots}{(2i_1)!!(2i_2-2i_1)!!\dots (2i_{k+1}-2i_k)!!(2n-2i_{k+1})!!},
\end{align*}
where $P_{j}^{(n,i_1,\dots,i_{k+1})}$ are the coefficients of the polynomial
\begin{align*}
(t+1)(t+3)\ldots (t+2i_1-1)\times (t+1)(t+3)\ldots (t+2n-2i_{k+1}-1)
\\
\times \prod_{l=1}^k[(t+1)(t+2)\dots(t+i_{l+1}-i_l-1)]
= \sum_{j=0}^n P^{(n,i_1,\dots,i_{k+1})}_{j} t^j.
\end{align*}
Summing this up over all $(k+1)$-tuples from~\eqref{2012} gives the average number of $k$-faces of the convex hull:
\begin{align*}
   \E&|\cF_k(\conv(S_0,\dots,S_n))|
    \\
    &=2\, \sum_{0 \leq i_1<\dots<i_{k+1}\leq n}\frac{P^{(n,i_1,\dots,i_{k+1})}_{d-k-1}+P^{(n,i_1,\dots,i_{k+1})}_{d-k-3}+\dots}{(2i_1)!!(2i_2-2i_1)!!\dots (2i_{k+1}-2i_k)!!(2n-2i_{k+1})!!}.
\end{align*}

Surprisingly, as was shown in~\cite{KVZ17b}, this formula remains true even \emph{without} symmetry assumption for the step distribution.

If $k=d-1$, then the formula reduces to the formula of Barndorff-Nielsen and Baxter~\cite{BB63} (see also~\cite{VZ18}):
\begin{align}\label{2238}
    \E|\cF_{d-1}(\conv(S_0,\dots,S_n))|=
    2\sum_{\substack{j_1+\dots+j_{d-1}\leq n \\ j_1,\dots,j_{d-1}\geq1}}
	 \frac1{j_1\dots j_{d-1}}.
\end{align}

\subsection{Several random walks}\label{1205}
Now let us turn to the case of \emph{several} random walks which is our main interest in this paper.

Fix $m,n_1,\dots,n_m \in \N$,  and let
\begin{align*}
    X_1^{(1)}, \dots, X_{n_1}^{(1)}, \;\; \dots,\;\; X_1^{(m)}, \dots, X_{n_m}^{(m)}
\end{align*}
be independent $d$-dimensional random vectors. As above, we assume that they are symmetrically and absolutely continuously distributed and for any $l=1,\dots,m$, the vectors 
\begin{align*}
    X_1^{(l)}, \dots, X_{n_l}^{(l)}\quad\text{are identically distributed.}
\end{align*}

Consider the collection of $m$ random walks $(S_i^{(1)})_{i=1}^{n_1},\dots, (S_i^{(m)})_{i=1}^{n_m}$ defined as
\begin{align*}
S_i^{(l)} = X_1^{(l)} + \dots + X_{i}^{(l)}, \, 1\leq i \leq n_l,\,  1\leq l \leq m.
\end{align*}
We aim to study the properties of their convex hull 
\begin{align*}
    \cQ_{d}:=\conv \left(S_1^{(1)}, \dots, S_{n_1}^{(1)}, \;\; \dots,\;\; S_1^{(m)}, \dots, S_{n_m}^{(m)}\right)
\end{align*}
and also the convex hull with the origin
\begin{align*}
    \cQ_{d}^0:=\conv \left(0,S_1^{(1)}, \dots, S_{n_1}^{(1)}, \;\; \dots,\;\; S_1^{(m)}, \dots, S_{n_m}^{(m)}\right).
\end{align*}

Under these quite general assumptions, it was shown in~\cite{KVZ17a} that
\begin{equation*}
\P[0 \notin \cQ_d] = 2\,\frac{P^{(n_1,\dots,n_m)}_{d-1}+P^{(n_1,\dots,n_m)}_{d-3}+\dots}{(2n)!!},
\end{equation*}
where $P^{(n_1,\dots,n_m)}_{k}$ are the coefficients of the polynomial
$$
\prod_{l=1}^m\big((t+1)(t+3)\ldots (t+2n_l-1)\big) = \sum_{k=0}^{n_1+\dots+n_m} P^{(n_1,\dots,n_m)}_{k} t^k.
$$
Taking $n_1=\dots=n_m=1$ recovers the classical Wendel formula~\cite{jW62}
\begin{align*}
    \P[0\notin\conv(X^{(1)},X^{(2)}\dots,X^{(m)})]=\frac1{2^{m-1}}\sum_{j=0}^{d-1}\binom{m-1}j,
\end{align*}
where $X^{(1)}:=X_1^{(1)},\dots, X^{(m)}:=X_1^{(m)}$. Again, let us stress that this non-absorption probability is distribution-free.

Yet, the  average number of facets of $\conv(X^{(1)},X^{(2)}\dots,X^{(m)})$ \emph{does depend} on the step distribution, see~\cite{hC70},~\cite{fA91},~\cite{rD91},~\cite{eH11},~\cite{KTT19}, where the asymptotic formulae for the different distribution classes were obtained.

Thus the average number of facets of $\cQ_{d}$ and $\cQ_{d}^0$ \emph{is no longer distribution free}. In this paper, we study the case when the steps are \emph{the standard Gaussian vectors}.

\subsection{Paper structure}
In the next section, we fix basic notation and formulate our main theorem which gives the   expected value of some general geometrical functional of $\cQ_{d}$, $\cQ_{d}^0$.

In Section~\ref{825}, from this main theorem we derive the formulae for the expected number of facets, volume and surface area of $\cQ_{d}$, $\cQ_{d}^0$.

As examples, several known results are derived in Section~\ref{2129}.

In our proofs, we will be faced with having to calculate the first and second moments of some random determinant of Gaussian type. To this end, in Section~\ref{1138} we formulate a general result that  gives all positive moments of the volume of a random simplex from some class of Gaussian simplices. 

The proofs are located in Section~\ref{2139}.



\section{Setting and Main Results}
In the remainder of the paper we assume that the steps
\begin{align*}
    X_1^{(1)}, \dots, X_{n_1}^{(1)}, \;\; \dots,\;\; X_1^{(m)}, \dots, X_{n_m}^{(m)}\in\R^d
\end{align*}
are independent standard Gaussian random vectors.

With probability~$1$, ${\cQ_{d}}$ and ${\cQ_{d}^0}$ are convex polytopes with boundaries of the form
\begin{align*}
    \partial\,{\cQ_{d}}=\bigcup_{F\in \cF({\cQ_{d}})} F\quad\text{and}\quad \partial\,{\cQ_{d}^0}=\bigcup_{F\in \cF({\cQ_{d}^0})} F,
\end{align*}
where $\cF(\cdot):=\cF_{d-1}(\cdot)$ stands for the set of facets ($(d-$1$)$-dimensional faces) of a polytope. Each facet is a $(d-$1$)$-dimensional simplex almost surely.

Fix some non-negative integers 
\begin{align}\label{1207}
    k_1,\dots,k_m\quad\text{such that}\quad k_l\leq n_l\quad\text{and}\quad\sum_{l=1}^mk_l=d,
\end{align}
then fix $d$ indices
\begin{align}\label{1208}
    1\leq i_1^{(l)}<\dots< i_{k_l}^{(l)}\leq n_l\quad\text{for} \quad l=1,\dots,m \quad\text{such that}\quad k_l>0,
\end{align}
and denote by $\BS_{d}$ a $d$-tuple defined as
\begin{align}\label{1212}
    \BS_{d}:=\bigg(S_{i_1^{(1)}}^{(1)},\dots, S_{i_{k_1}^{(1)}}^{(1)}, \dots,  S_{i_1^{(m)}}^{(m)}, \dots, S_{i_{k_m}^{(m)}}^{(m)}\bigg)
\end{align}
with the convention that
\begin{align*}
    \left\{S_{i_1^{(l)}}^{(l)}, \dots, S_{i_{k_l}^{(l)}}^{(l)}\right\}:=\emptyset\quad\text{for}\quad k_l=0.
\end{align*}
We also write
\begin{align*}
    \conv\BS_{d}:=\conv\left(S_{i_1^{(1)}}^{(1)}, \dots, S_{i_{k_1}^{(1)}}^{(1)}, \dots,  S_{i_1^{(m)}}^{(m)}, \dots, S_{i_{k_m}^{(m)}}^{(m)}\right).
\end{align*}

In the same way fix some non-negative integers 
\begin{align}\label{01207}
    k_1,\dots,k_m\quad\text{such that}\quad k_l\leq n_l\quad\text{and}\quad\sum_{l=1}^mk_l=d-1,
\end{align}
and fix $d-1$ indices
\begin{align}\label{01208}
    1\leq i_1^{(l)}<\dots< i_{k_l}^{(l)}\leq n_l\quad\text{for} \quad l=1,\dots,m \quad\text{such that}\quad k_l>0,
\end{align}
and similarly let
\begin{align}\label{01212}
    \BS_{d}^0:=\bigg(0,S_{i_1^{(1)}}^{(1)},\dots, S_{i_{k_1}^{(1)}}^{(1)}, \dots,  S_{i_1^{(m)}}^{(m)}, \dots, S_{i_{k_m}^{(m)}}^{(m)}\bigg)
\end{align}
and
\begin{align*}
    \conv\BS_{d}^0:=\conv\left(0,S_{i_1^{(1)}}^{(1)}, \dots, S_{i_{k_1}^{(1)}}^{(1)}, \dots,  S_{i_1^{(m)}}^{(m)}, \dots, S_{i_{k_m}^{(m)}}^{(m)}\right).
\end{align*}


Note that $\conv\BS_{d}$ may or may not be a facet of ${\cQ_{d}}$. Moreover, every facet $F\in\cF({\cQ_{d}})$ can be represented as $\conv\BS_{d}$ with $\BS_{d}$ defined in~\eqref{1212} with some integers~\eqref{1207} and indices~\eqref{1208}. Therefore, we arrive at the following elementary albeit  crucial relation: with probability one,
\begin{align}\label{1151}
    \sum_{F\in \cF({\cQ_{d}})}g(F)=\sum_{\substack{k_1+\dots+k_m=d \\ 0\leq k_l\leq n_l, \,l=1,\dots,m}}\quad \sum_{\substack{1\leq i_1^{(l)}<\dots< i_{k_l}^{(l)}\leq n_l \\ l=1,\dots,m\,:\, k_l>0}}g(\BS_{d})\ind_{\lbrace\BS_{d}\in\cF({\cQ_{d}})\rbrace},
\end{align}
where $g:(\R^d)^d \to \R^1$ is an arbitrary symmetric, non-negative, measurable function. Here, we write
\begin{align*}
    g(F):=g(\bx_1^F,\dots,\bx_d^F),\quad\text{where $\bx_1^F,\dots,\bx_d^F$ are the vertices of $F$.}
\end{align*}
For the convex hull with the origin, the analogous relation is a little bit more complicated: with probability one,
\begin{align}\label{01151}
    \sum_{F\in \cF({\cQ_{d}^0})}g(F)=&\sum_{\substack{k_1+\dots+k_m=d \\ 0\leq k_l\leq n_l, \,l=1,\dots,m}}\quad \sum_{\substack{1\leq i_1^{(l)}<\dots< i_{k_l}^{(l)}\leq n_l \\ l=1,\dots,m\,:\, k_l>0}}g(\BS_{d})\ind_{\lbrace\BS_{d}\in\cF({\cQ_{d}^0})\rbrace}
    \\\notag&+\sum_{\substack{k_1+\dots+k_m=d-1 \\ 0\leq k_l\leq n_l, \,l=1,\dots,m}}\quad \sum_{\substack{1\leq i_1^{(l)}<\dots< i_{k_l}^{(l)}\leq n_l \\ l=1,\dots,m\,:\, k_l>0}}g(\BS_{d}^0)\ind_{\lbrace\BS_{d}^0\in\cF({\cQ_{d}^0})\rbrace}.
\end{align}

We aim to calculate the expectations in the right-hand sides in~\eqref{1151} and~\eqref{01151}, which by taking $g\equiv 1$ will readily give us the expected number of facets of ${\cQ_{d}}$ and ${\cQ_{d}}^0$. Moreover, by making a more subtle choice of $g$, we will be able to find the expected volumes and surface areas of ${\cQ_{d}}$ and ${\cQ_{d}^0}$.


Our results will be expressed in terms of the unconditional and conditional Gaussian persistence probabilities: for $r\in\R^1$ let
\begin{align}\label{1350}
    p_n(r)&:=\P\big[\sum_{i=1}^kN_i\leq r,k=1,\dots,n\big],
    \\\notag
    q_n(r)&:=\P\big[\sum_{i=1}^kN_i\leq r,k=1,\dots,n\given \sum_{i=1}^nN_i=r\big],
\end{align}
where $N_1,\dots, N_n$ are independent standard Gaussian random variables. Due to the symmetry of the distribution, for any $r\geq 0$ the definition of $q_n(r)$ is equivalent to
\begin{align}\label{1907}
    q_n(r)&:=\P\big[\sum_{i=1}^kN_i\geq 0,k=1,\dots,n-1\given \sum_{i=1}^nN_i=r\big].
\end{align}
Note that
\begin{align}\label{3}
    p_1(r)=\Phi(r),\quad q_1(r)\equiv 1,
\end{align}
and \begin{align}\label{1900}
p_n(0)=\frac{(2n-1)!!}{(2n)!!},\quad q_n(0)=\frac1n,
\end{align}
where $\Phi(r)=\int_{-\infty}^{r}\varphi(s)\dd s$ is the cumulative distribution of the standard Gaussian density $\varphi(r)$, and~\eqref{1900} was established by Sparre Andersen~\cite{eSA49,eSA53} (who proved it in a setting much more general than the Gaussian one considered here; the first part of~\eqref{1900} was mentioned in the introduction, see~\eqref{1141}).
\medskip

For a linear hyperplane $L\subset\R^d$ denote by $P_L$ the operator of the orthogonal projection onto $L$. In particular, denote by $P_d$ the operator of the orthogonal projection onto the first $d-$1 coordinates:
\begin{align*}
    P_d\,:\,\bx=(x_1,\dots,x_{d-1},x_d)\mapsto (x_1,\dots,x_{d-1}, 0).
\end{align*}

The $d$-dimensional volume is denoted by $|\cdot|$. Some of the sets we consider have dimension $d-$1, for them $|\cdot|$ stands for the $(d-$1$)$-dimensional volume. For finite sets, $|\cdot|$ denotes cardinality.

Let $\mathbb S^{d-1}\subset\R^d$ denote the unit $(d-$1$)$-dimensional sphere centered at the origin and equipped with the Lebesgue measure $\mu$ normalized to be probabilistic. For $u\in\mathbb S^{d-1}$ denote by $u^\perp$ the linear hyperplane orthogonal to $u$.

Now we are  in position  to formulate our main result.

\begin{theorem}\label{2047}
    Let $g:(\R^d)^d \to \R^1$ be some bounded measurable function invariant with respect to translations and rotations.\footnote{By this we mean that $g(\mathbf a+Q\bx_1,\dots,\mathbf a+Q\bx_d)=g(\bx_1,\dots,\bx_d)$ for any $\mathbf a,\bx_1,\dots,\bx_d\in\R^d$ and any orhtogonal matrix $Q\in\R^{d\times d}$. } Consider some $\BS_{d}$ defined in~\eqref{1212} with some integers from~\eqref{1207} and indices from~\eqref{1208}. Then,
\begin{align}\label{2140}
	 \E&[g(\BS_{d})\ind_{\lbrace\conv\BS_{d}\in\cF({\cQ_{d}})\rbrace}]=\notag  \frac{2^{d/2}\Gamma\big(\frac {d+1}2\big)}{\sqrt\pi}\,\E\big[g( P_d\BS_{d})\, |\mathrm{conv} P_d\BS_{d}|\big]
	 \\
	 &\times\prod_{l\,:\,k_l \ne0}\Bigg[\Big({i_1^{(l)}}\Big)^{-1/2}\Big(\big(i_2^{(l)}-i_1^{(l)}\big)\dots\big(i_{k_l}^{(l)}-i_{k_{l}-1}^{(l)}\big)\Big)^{-3/2}\cdot\frac{(2(n_l-i^{(l)}_{k_l})-1)!!}{(2(n_l-i^{(l)}_{k_l}))!!}\Bigg]\notag
	 \\&\times\int_{-\infty}^\infty \exp\bigg(-\frac {r^2}{2}\sum_{l\,:\, k_l\ne0}\frac{1}{i_1^{(l)}}\bigg)\prod_{l\,:\,k_l =0}p_{n_l}(r)\prod_{l\,:\,k_l \ne0}q_{i_1^{(l)}}(r)  \dd r
\end{align}    
and
\begin{align}\label{3140}
	 \E&[g(\BS_{d})\ind_{\lbrace\conv\BS_{d}\in\cF({\cQ_{d}^0})\rbrace}]=\notag   \frac{2^{d/2}\Gamma\big(\frac {d+1}2\big)}{\sqrt\pi}\,\E\big[g( P_d\BS_{d})\, |\mathrm{conv} P_d\BS_{d}|\big]
	 \\
	 &\times\prod_{l\,:\,k_l \ne0}\Bigg[\Big({i_1^{(l)}}\Big)^{-1/2}\Big(\big(i_2^{(l)}-i_1^{(l)}\big)\dots\big(i_{k_l}^{(l)}-i_{k_{l}-1}^{(l)}\big)\Big)^{-3/2}\cdot\frac{(2(n_l-i^{(l)}_{k_l})-1)!!}{(2(n_l-i^{(l)}_{k_l}))!!}\Bigg]\notag
	 \\&\times\int_0^\infty \exp\bigg(-\frac {r^2}{2}\sum_{l\,:\, k_l\ne0}\frac{1}{i_1^{(l)}}\bigg)\prod_{l\,:\,k_l =0}p_{n_l}(r)\prod_{l\,:\,k_l \ne0}q_{i_1^{(l)}}(r)  \dd r.
\end{align}    

Also consider some $\BS_{d}^0$ defined in~\eqref{01212} with some integers from~\eqref{01207} and indices from~\eqref{01208}. Then,  
\begin{align}\label{1152}
	 \notag\E&[g(\BS_{d}^0)\ind_{\lbrace\conv\BS_{d}^0\in\cF({\cQ_{d}^0})\rbrace}]=
	 2^{\frac{d+1}{2}}\Gamma\Big(\frac {d+1}2\Big)\,\E\big[g( P_d\BS_{d}^0)\, |\mathrm{conv} P_d\BS_{d}^0|\big]
	 \\
	 &\times\prod_{l=1}^m\Bigg[\Big(i_1^{(l)}\big(i_2^{(l)}-i_1^{(l)}\big)\dots\big(i_{k_l}^{(l)}-i_{k_{l}-1}^{(l)}\big)\Big)^{-3/2}\cdot\frac{(2(n_l-i^{(l)}_{k_l})-1)!!}{(2(n_l-i^{(l)}_{k_l}))!!}\Bigg],
\end{align}    
with the convention that for $k_l=0$,  the factor in the product equals ${(2n_l-1)!!}/{(2n_l)!!}$. 
\end{theorem}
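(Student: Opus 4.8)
The plan is to realise the facet event through a separating hyperplane and to reduce everything to one‑dimensional persistence. Since the steps are symmetric, $\conv\BS_{d}$ is a facet of $\cQ_d$ precisely when the affine hyperplane $H$ spanned by the $d$ points of $\BS_d$ has all remaining random‑walk points strictly on one of its two sides; the two one‑sided events are disjoint and, by the symmetry of the Gaussian steps, equiprobable, which is the source of the overall factor $2$. Using the rotational invariance of the standard Gaussian law and of $g$, I would fix the outer normal of $H$ to be the last coordinate direction $e_d$ and split each step $X_i^{(l)}=(\widetilde X_i^{(l)},x_i^{(l)})$ into its projection onto $\{x_d=0\}$ (a $(d-1)$‑dimensional standard Gaussian) and its last coordinate (an independent standard Gaussian scalar). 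The last coordinates yield $m$ independent one‑dimensional Gaussian random walks, independent of the $m$ planar walks formed by the projections.

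With the normal fixed, the facet event factorises. First, the $d$ selected points must share a common last coordinate, some level $r\in\R$; this is a condition on the one‑dimensional walks alone. Second, all remaining points must have last coordinate $\le r$. I would then invoke the affine Blaschke--Petkantschin formula to change variables from the selected points to the pair (level $r$, in‑hyperplane configuration), the Jacobian being the $(d-1)$‑volume $|\conv P_d\BS_d|$ up to a combinatorial constant, while the unit‑normal integration contributes the normalized sphere‑measure constant. Because all selected points now sit at level $r$, translation invariance gives $g(\BS_d)=g(P_d\BS_d)$, producing the factor $\E[g(P_d\BS_d)\,|\conv P_d\BS_d|]$. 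For the one‑dimensional contribution, independence of increments factorises the one‑sidedness across the gaps between consecutive selected indices: the initial segment $[0,i_1^{(l)}]$ gives the conditional persistence $q_{i_1^{(l)}}(r)$ together with the Gaussian density $\varphi_{i_1^{(l)}}(r)$ of reaching level $r$, accounting for the $(i_1^{(l)})^{-1/2}e^{-r^2/(2i_1^{(l)})}$ factors; each intermediate bridge over a gap $g=i_{j+1}^{(l)}-i_j^{(l)}$ contributes, via $q_g(0)=1/g$ from~\eqref{1900} and a bridge density at $0$, a factor $\propto g^{-3/2}$; the final segment $[i_{k_l}^{(l)},n_l]$ contributes the unconditional persistence $p_{n_l-i_{k_l}^{(l)}}(0)=\tfrac{(2(n_l-i_{k_l}^{(l)})-1)!!}{(2(n_l-i_{k_l}^{(l)}))!!}$; and each unselected walk ($k_l=0$) contributes $p_{n_l}(r)$. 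Collecting these over $l$ and integrating over $r\in\R$ reproduces~\eqref{2140}.

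The two remaining identities are variations on the same computation. For $\conv\BS_d$ to be a facet of $\cQ_d^0$ one additionally needs the origin, which sits at level $0$, on the chosen side; tracking this through the two‑sided symmetry argument restricts the level integral to $r\ge0$ while leaving the prefactor and integrand unchanged, giving~\eqref{3140}. For $\conv\BS_d^0$ to be a facet of $\cQ_d^0$, the separating hyperplane passes through the origin and is therefore linear, so the level is pinned at $r=0$: there is no integration over $r$, the relation $q_{i_1^{(l)}}(0)=1/i_1^{(l)}$ merges the initial‑segment factor into the common $(\,\cdots)^{-3/2}$ product, an unselected walk contributes $p_{n_l}(0)=\tfrac{(2n_l-1)!!}{(2n_l)!!}$ (the stated convention), and the altered constant $2^{(d+1)/2}\Gamma(\tfrac{d+1}2)$ absorbs the Gaussian normalization $\sqrt{2\pi}$ lost by evaluating at $r=0$ rather than integrating.

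The main obstacle I expect is the Blaschke--Petkantschin step in this random‑walk setting: correctly disentangling the normal direction and the in‑hyperplane configuration so as to produce exactly the $(d-1)$‑volume Jacobian and the sphere‑measure constant, and then rigorously justifying the factorisation of the one‑sidedness event into bridge‑type and free persistence probabilities using independence of increments and the Sparre Andersen identities~\eqref{1141},~\eqref{1900}. The bookkeeping of the normalization constants (the Blaschke--Petkantschin combinatorial factor, the Gaussian densities $\varphi_{i_1^{(l)}}(r)$ and the bridge densities at $0$, and the sphere normalization) is the other place where care is needed to land the precise prefactors $\tfrac{2^{d/2}\Gamma(\frac{d+1}2)}{\sqrt\pi}$ and $2^{(d+1)/2}\Gamma(\frac{d+1}2)$.
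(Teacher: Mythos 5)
Your proposal is correct and follows essentially the same route as the paper's proof: the affine (resp.\ linear) Blaschke--Petkantschin formula to disintegrate over the normal direction and the level $r$, rotational and translational invariance to reduce the in-hyperplane factor to $\E\big[g(P_d\BS_{d})\,|\conv P_d\BS_{d}|\big]$, and independence of increments plus the Sparre Andersen identities to factor the one-sidedness event into the $p_{n_l}(r)$, $q_{i_1^{(l)}}(r)$, bridge, and tail-persistence factors, with the three cases distinguished exactly as you describe (integral over $\R$, over $[0,\infty)$, and level pinned at $0$). One minor imprecision: at a fixed level $r$ the two one-sided events are not equiprobable (the probability of one at level $r$ equals that of the other at level $-r$), so the ``factor $2$'' materialises, as in the paper, through the identity $\int_0^\infty\big(h(r)+h(-r)\big)\dd r=\int_{-\infty}^\infty h(r)\dd r$ rather than as a literal prefactor.
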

\noindent The proof is based on an idea from~\cite{RF12} and given in Section~\ref{2139}. Now let us derive some applications.

\section{Applications}\label{825}
\subsection{Moments of Gaussian simplex volume}
In the right-hand sides of~\eqref{2140},~\eqref{3140}, and~\eqref{1152}, the terms
\begin{align*}
    \E\big[g( P_d\BS_{d})\, |\mathrm{conv} P_d\BS_{d}|\big],\quad\E\big[g( P_d\BS_{d}^0)\, |\mathrm{conv} P_d\BS_{d}^0|\big]
\end{align*}
are difficult to deal with. To get rid of them in the forthcoming applications, we will need formulae for the moments of $|\conv P_d\BS_{d}|,|\conv P_d\BS_{d}^0|$. In Section~\ref{1138}, we will obtain a much more general result of this type: we will find  all positive moments of a Gaussian simplex generated by several independent Gaussian walks with arbitrarily weighted steps. Now we formulate  only the statement which is necessary for us.
\begin{proposition}\label{1157}
For any $p>0$,
\begin{align*}
    \E&|\conv P_d\BS_{d}|^p=
    \prod_{i=1}^{d-1}\frac{\Gamma\big(\frac{i+p}{2}\big)}{\Gamma\big(\frac{i}{2}\big)}
    \\
    &\times\Bigg[\frac{2^{d-1}}{((d-1)!)^2}\bigg(\sum_{l:k_l\ne 0}\frac1{i_1^{(l)}}\bigg)
    \prod_{l:k_l\ne 0}\Big(i_1^{(l)}\big(i_2^{(l)}-i_1^{(l)})\dots\big(i_{k_l}^{(l)}-i_{k_{l}-1}^{(l)}\big)\bigg)\Bigg]^{p/2}
\end{align*}
and
\begin{align*}
    \E &|\conv P_d\BS_{d}^0|^p=\prod_{i=1}^{d-1}\frac{\Gamma\big(\frac{i+p}{2}\big)}{\Gamma\big(\frac{i}{2}\big)}
    \\
    &\times\Bigg[\frac{2^{d-1}}{((d-1)!)^2}\prod_{\substack{l\,:\,k_l\ne 0\\ l\ne 0}}\bigg(i_1^{(l)}\big(i_2^{(l)}-i_1^{(l)})\dots\big(i_{k_l}^{(l)}-i_{k_{l}-1}^{(l)}\big)\bigg)\Bigg]^{p/2}.
\end{align*}
\end{proposition}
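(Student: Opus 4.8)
The plan is to factor the problem into a pure Gaussian determinant whose moments are classical, and the determinant of a one-dimensional random-walk covariance matrix, which carries all the index-dependence. Since $\conv P_d\BS_{d}$ has $d$ vertices lying in the $(d-1)$-dimensional image of $P_d$, its volume is $|\conv P_d\BS_{d}|=\frac{1}{(d-1)!}|\det M|$, where $M$ is the $(d-1)\times(d-1)$ matrix whose columns are the differences of the projected vertices from a chosen base vertex. The structural observation is that the steps $X_t^{(l)}$ have i.i.d.\ standard Gaussian coordinates, so the $d-1$ spatial coordinates of the whole vertex configuration are i.i.d.\ copies of the underlying \emph{one-dimensional} random-walk configuration. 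Hence the rows of $M$ are i.i.d.\ centered Gaussian vectors with a common covariance matrix $C$, namely the covariance matrix of the vector of differences of the one-dimensional walks. I would record this as $M\eqdistr Z\,C^{1/2}$ with $Z$ a $(d-1)\times(d-1)$ matrix of independent $N(0,1)$ entries, so that $|\det M|^p=(\det C)^{p/2}|\det Z|^p$ and
\[
\E|\conv P_d\BS_{d}|^p=\frac{(\det C)^{p/2}}{((d-1)!)^p}\,\E|\det Z|^p .
\]

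Next I would compute $\E|\det Z|^p$ by the Gram--Schmidt (base-times-height) decomposition together with rotational invariance, which gives $|\det Z|^2\eqdistr\prod_{i=1}^{d-1}\chi_i^2$ with independent chi-squared factors, and therefore $\E|\det Z|^p=2^{(d-1)p/2}\prod_{i=1}^{d-1}\Gamma\!\big(\tfrac{i+p}{2}\big)/\Gamma\!\big(\tfrac i2\big)$. Substituting this into the display above already produces the Gamma prefactor of the Proposition together with the bracket $\big[2^{d-1}\det C/((d-1)!)^2\big]^{p/2}$, so the entire statement reduces to the identity $\det C=\big(\sum_{l:k_l\ne0}1/i_1^{(l)}\big)\prod_{l:k_l\ne0}\big(i_1^{(l)}(i_2^{(l)}-i_1^{(l)})\cdots(i_{k_l}^{(l)}-i_{k_l-1}^{(l)})\big)$.

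The crux is this determinant of $C$, which I would evaluate by a unimodular change of basis on the difference vectors (which leaves $|\det M|$, hence $\det C$, unchanged). I replace the differences by the within-walk consecutive increments $S^{(l)}_{i^{(l)}_{j+1}}-S^{(l)}_{i^{(l)}_{j}}$ and the between-walk differences of the first points $A_l:=S^{(l)}_{i_1^{(l)}}$. The within-walk increments use disjoint blocks of steps, so they are mutually independent with variances equal to the gaps $i^{(l)}_{j+1}-i^{(l)}_{j}$, and they are independent of the $A_l$; the $A_l$ are independent across $l$ with variances $i_1^{(l)}$. Thus $C$ is block diagonal: the increment block is diagonal and contributes $\prod_{l}(i_2^{(l)}-i_1^{(l)})\cdots(i_{k_l}^{(l)}-i_{k_l-1}^{(l)})$, while (taking $A_1$ as base among the $r$ active walks) the first-points block has the form $\mathrm{diag}(a_2,\dots,a_r)+a_1\mathbf 1\mathbf 1^{\top}$ with $a_l=i_1^{(l)}$, whose determinant equals $\big(\prod_{l}a_l\big)\big(\sum_{l}1/a_l\big)$ by the matrix determinant lemma. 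Multiplying the two blocks gives exactly the claimed $\det C$. For $\BS_{d}^0$ I run the same argument with the origin as base vertex; then the first points are measured from a fixed point of zero variance, the rank-one correction $a_1\mathbf 1\mathbf 1^{\top}$ disappears, $C$ becomes fully diagonal, and the factor $\sum_{l}1/i_1^{(l)}$ is absent, producing the second formula.

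I expect the fourth step to be the main obstacle: one must justify that the unimodular reduction is legitimate, correctly identify the block-diagonal structure of $C$, and verify that the between-walk block is precisely diagonal-plus-rank-one so that the matrix determinant lemma applies. The reduction in the first step — passing from the $d$-dimensional simplex to a single spatial coordinate via the i.i.d.\ rows of $M$ — is the structural observation that makes the computation tractable, and the rest is bookkeeping.
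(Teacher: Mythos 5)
Your proof is correct, and it takes a genuinely different route from the paper's. The paper obtains Proposition~\ref{1157} as a corollary of a general result on weighted Gaussian simplices: Theorem~\ref{840}, proved via the projection identity of~\cite{PP13} relating $\E|AT|^p$ to $\E\det^{p/2}(AA^\top)$ times a sphere integral of projections of the fixed simplex $T=\conv(\sigma_0e_0,\dots,\sigma_de_d)\subset\R^{d+1}$ (whose volume is taken from~\cite{HH08}), followed by Corollaries~\ref{937} and~\ref{1733}, which pass to random-walk vertices through determinant column operations and the base-times-height Lemma~\ref{2055}. You instead exploit the coordinate structure head-on: the rows of the difference matrix $M$ are i.i.d.\ $N(0,C)$ with $C$ the covariance of the \emph{one-dimensional} difference configuration, so $|\det M|\eqdistr(\det C)^{1/2}|\det Z|$; the classical moments of $|\det Z|$ supply the Gamma prefactor, and $\det C$ is computed by a unimodular change of basis (within-walk increments plus between-walk differences of first points) together with the matrix determinant lemma. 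Your identification of the between-walk block as $\mathrm{diag}(a_2,\dots,a_r)+a_1\mathbf 1\mathbf 1^{\top}$ with determinant $\big(\prod_l a_l\big)\big(\sum_l a_l^{-1}\big)$, and of its collapse to a diagonal block when the origin is the base vertex, is exactly right and reproduces both formulae, including the presence/absence of the factor $\sum_{l:k_l\ne 0}1/i_1^{(l)}$. What each approach buys: yours is shorter and essentially self-contained for this particular statement --- no external projection formula, no simplex volume, no sphere integral, only linear algebra and the Gaussian determinant moments --- whereas the paper's detour establishes the stronger Theorem~\ref{840} (arbitrary weights $\sigma_i$, any number $l\le d$ of points, and the distributional identity~\eqref{942} with independent chi factors), which the authors present as a result of independent interest and reuse wholesale.
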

\begin{proof}
The first equation follows from~Corollary~\ref{937}, while the second one follows from Corollary~\ref{1733}, see Section~\ref{1138}.
\end{proof}

\subsection{Facets probabilities}
The first direct application of Theorem~\ref{2047} is a formula for the probability that $\conv\BS_{d}$ is a facet of ${\cQ_{d}}$.
\begin{theorem}\label{2048}
Consider any $\BS_{d}$ defined in~\eqref{1212} with some integers from~\eqref{1207} and indices from~\eqref{1208}. Then,
\begin{align*}
	 \notag\P&[\conv\BS_{d}\in\cF({\cQ_{d}})]=\frac{2}{\sqrt{2\pi}}\cdot\bigg(\sum_{k_l \ne0}\frac1{i_1^{(l)}}\bigg)^{1/2}
	 \\
	 &\hphantom{=}
	 \times\prod_{k_l \ne0}\Bigg[\frac1{i_2^{(l)}-i_1^{(l)}}\dots\frac1{i_{k_l}^{(l)}-i_{k_{l}-1}^{(l)}}\frac{(2(n_l-i^{(l)}_{k_l})-1)!!}{(2(n_l-i^{(l)}_{k_l}))!!}\Bigg]\notag
	 \\
	 &\hphantom{=}
	 \times\int_{-\infty}^\infty
	 \exp\bigg(-\frac {r^2}{2}\sum_{k_l \ne0}\frac{1}{i_1^{(l)}}\bigg)
	 \prod_{k_l=0}p_{n_l}(r)\prod_{k_l \ne0}q_{i_1^{(l)}}(r)  \dd r
\end{align*} 
and
\begin{align*}
	 \notag\P&[\conv\BS_{d}\in\cF({\cQ_{d}^0})]=\frac{2}{\sqrt{2\pi}}\cdot\bigg(\sum_{k_l \ne0}\frac1{i_1^{(l)}}\bigg)^{1/2}
	 \\
	 &\hphantom{=}
	 \times\prod_{k_l \ne0}\Bigg[\frac1{i_2^{(l)}-i_1^{(l)}}\dots\frac1{i_{k_l}^{(l)}-i_{k_{l}-1}^{(l)}}\frac{(2(n_l-i^{(l)}_{k_l})-1)!!}{(2(n_l-i^{(l)}_{k_l}))!!}\Bigg]\notag
	 \\
	 &\hphantom{=}
	 \times\int_{0}^\infty
	 \exp\bigg(-\frac {r^2}{2}\sum_{k_l \ne0}\frac{1}{i_1^{(l)}}\bigg)
	 \prod_{k_l=0}p_{n_l}(r)\prod_{k_l \ne0}q_{i_1^{(l)}}(r)  \dd r.
\end{align*}  
Also consider any $\BS_{d}^0$ defined in~\eqref{01212} with some integers from~\eqref{01207} and indices from~\eqref{01208}. Then,  
\begin{align}\label{2137}
	 \P&[\conv\BS_{d}^0\in\cF({\cQ_{d}^0})]
	 \\\notag
	 &=2\prod_{l=1}^m\Bigg[\frac1{i_1^{(l)}}\frac1{i_2^{(l)}-i_1^{(l)}}\dots\frac1{i_{k_l}^{(l)}-i_{k_{l}-1}^{(l)}}\frac{(2(n_l-i^{(l)}_{k_l})-1)!!}{(2(n_l-i^{(l)}_{k_l}))!!}\Bigg],
\end{align}     
with the convention that  the factor in the product equals ${(2n_l-1)!!}/{(2n_l)!!}$ for $k_l=0$. 
\end{theorem}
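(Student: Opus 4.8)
The plan is to obtain all three identities by specializing the main theorem to the constant function $g\equiv 1$. With this choice the factor $\E[g(P_d\BS_d)\,|\conv P_d\BS_d|]$ in~\eqref{2140} and~\eqref{3140} collapses to the first moment $\E|\conv P_d\BS_d|$, and likewise $\E[g(P_d\BS_d^0)\,|\conv P_d\BS_d^0|]$ in~\eqref{1152} collapses to $\E|\conv P_d\BS_d^0|$. These first moments are exactly the $p=1$ instances of Proposition~\ref{1157}, so the whole argument reduces to substituting those expressions and simplifying the resulting constants and index products. Since every step is a direct substitution, there is no genuine obstacle; the only delicate point is the bookkeeping of the half-integer powers of the index differences together with a correct application of the Legendre duplication formula, so that the many powers of $2$ and the Gamma factors cancel down to the clean constants $\tfrac{2}{\sqrt{2\pi}}$ and $2$.

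First I would record the telescoping identity
\[
\prod_{i=1}^{d-1}\frac{\Gamma\big(\tfrac{i+1}{2}\big)}{\Gamma\big(\tfrac{i}{2}\big)}=\frac{\Gamma(d/2)}{\Gamma(1/2)}=\frac{\Gamma(d/2)}{\sqrt\pi},
\]
which is what the $p=1$ Gamma prefactor of Proposition~\ref{1157} becomes. Thus $\E|\conv P_d\BS_d|$ equals $\tfrac{\Gamma(d/2)}{\sqrt\pi}$ times the square root of the bracketed quantity, and similarly for $\E|\conv P_d\BS_d^0|$, where the numerical part of the bracket contributes $\tfrac{2^{(d-1)/2}}{(d-1)!}$.

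The key simplification is then twofold. On the one hand, the square root of $\prod_{l:k_l\ne 0}\big(i_1^{(l)}(i_2^{(l)}-i_1^{(l)})\cdots(i_{k_l}^{(l)}-i_{k_l-1}^{(l)})\big)$ coming from the volume moment combines with the $(i_1^{(l)})^{-1/2}\big((i_2^{(l)}-i_1^{(l)})\cdots\big)^{-3/2}$ factors already present in~\eqref{2140} and~\eqref{3140}: the $(i_1^{(l)})^{1/2}$ cancels the $(i_1^{(l)})^{-1/2}$, and the remaining power is $-3/2+1/2=-1$, leaving exactly $\prod_{l:k_l\ne0}\big(i_2^{(l)}-i_1^{(l)}\big)^{-1}\cdots\big(i_{k_l}^{(l)}-i_{k_l-1}^{(l)}\big)^{-1}$ times the surviving double-factorial ratios, which reproduces the index product in the statement. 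The factor $\big(\sum_{l:k_l\ne0}\tfrac{1}{i_1^{(l)}}\big)^{1/2}$ likewise survives from the $p/2$ power in the first part of Proposition~\ref{1157}. On the other hand, the constant prefactor becomes $\tfrac{2^{d/2}\Gamma(\frac{d+1}{2})}{\sqrt\pi}\cdot\tfrac{\Gamma(d/2)}{\sqrt\pi}\cdot\tfrac{2^{(d-1)/2}}{(d-1)!}$, which collapses via $\Gamma(\tfrac{d}{2})\Gamma(\tfrac{d+1}{2})=2^{1-d}\sqrt\pi\,(d-1)!$ to $2^{1/2}/\sqrt\pi=\tfrac{2}{\sqrt{2\pi}}$, matching the first two displays.

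The $\BS_d^0$ case is identical in spirit but uses the second part of Proposition~\ref{1157}. Taking $g\equiv1$ in~\eqref{1152}, the prefactor $2^{(d+1)/2}\Gamma(\tfrac{d+1}{2})$ together with $\tfrac{\Gamma(d/2)}{\sqrt\pi}\cdot\tfrac{2^{(d-1)/2}}{(d-1)!}$ collapses by the same duplication formula to the clean constant $2$; the $(i_1^{(l)}(i_2^{(l)}-i_1^{(l)})\cdots)^{-3/2}$ factors combine with the $(\cdots)^{1/2}$ from the volume moment to give $(i_1^{(l)}(i_2^{(l)}-i_1^{(l)})\cdots)^{-1}$, yielding~\eqref{2137}. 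Note that the absence of the $\sum_l 1/i_1^{(l)}$ term in the second moment of Proposition~\ref{1157} is precisely why no $\big(\sum 1/i_1^{(l)}\big)^{1/2}$ factor appears in~\eqref{2137}. Finally, the $k_l=0$ convention is handled automatically: for such blocks Proposition~\ref{1157} contributes no index factor, while~\eqref{2140}--\eqref{1152} retain the double-factorial ratio $(2n_l-1)!!/(2n_l)!!$, exactly as stated.
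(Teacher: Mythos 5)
Your proposal is correct and follows exactly the paper's own route: specialize Theorem~\ref{2047} to $g\equiv 1$, apply Proposition~\ref{1157} with $p=1$, and simplify via the Legendre duplication formula~\eqref{2228}. Your constant and exponent bookkeeping (the telescoping Gamma product, the cancellation to $\tfrac{2}{\sqrt{2\pi}}$ and to $2$, and the merging of the $\pm\tfrac12$, $-\tfrac32$ powers of the index differences) all check out.
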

\begin{proof}
The theorem readily follows from Theorem~\ref{2047} by taking $g\equiv 1$ and applying  Proposition~\ref{1157} with $p=1$.
\end{proof}

\subsection{Average number of facets}
The next direct application gives the average number of facets of ${\cQ_{d}}$ and ${\cQ_{d}^0}$.

\begin{theorem}\label{2148}
We have
\begin{align*}
    \E|\cF({\cQ_{d}})|=\sum_{\substack{k_1+\dots+k_m=d \\ 0\leq k_l\leq n_l, \,l=1,\dots,m}}\quad \sum_{\substack{1\leq i_1^{(l)}<\dots< i_{k_l}^{(l)}\leq n_l \\ l=1,\dots,m\,:\, k_l>0}}\P&[\conv\BS_{d}\in\cF({\cQ_{d}})],
\end{align*}
and
\begin{align*}
    \E|\cF({\cQ_{d}^0})|=&\sum_{\substack{k_1+\dots+k_m=d \\ 0\leq k_l\leq n_l, \,l=1,\dots,m}}\quad \sum_{\substack{1\leq i_1^{(l)}<\dots< i_{k_l}^{(l)}\leq n_l \\ l=1,\dots,m\,:\, k_l>0}}\notag\P[\conv\BS_{d}\in\cF({\cQ_{d}^0})]
    \\\notag&+\sum_{\substack{k_1+\dots+k_m=d-1 \\ 0\leq k_l\leq n_l, \,l=1,\dots,m}}\quad \sum_{\substack{1\leq i_1^{(l)}<\dots< i_{k_l}^{(l)}\leq n_l \\ l=1,\dots,m\,:\, k_l>0}} \P[\conv\BS_{d}^0\in\cF({\cQ_{d}^0})],
\end{align*}
where the facets probabilities in the right-hand sides are calculated in Theorem~\ref{2048}.
\end{theorem}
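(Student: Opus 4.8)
The plan is to obtain both identities as immediate consequences of the almost-sure decompositions~\eqref{1151} and~\eqref{01151}, specializing the test function to $g\equiv 1$ and passing to expectations. The constant function $g\equiv 1$ is bounded, non-negative, symmetric, and trivially invariant under translations and rotations, so it is a legitimate choice in~\eqref{1151} and~\eqref{01151}, and it is also covered by the hypotheses of Theorems~\ref{2047} and~\ref{2048}.

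First I would treat $\cQ_d$. With $g\equiv 1$ the left-hand side of~\eqref{1151} collapses to $\sum_{F\in\cF(\cQ_d)}1=|\cF(\cQ_d)|$, while the right-hand side becomes a \emph{finite} double sum of indicators $\ind_{\{\conv\BS_d\in\cF(\cQ_d)\}}$ ranging over the compositions $k_1+\dots+k_m=d$ with $0\le k_l\le n_l$ and over the admissible index tuples~\eqref{1208}. Taking expectations of both sides and using linearity over this finite index set gives $\E|\cF(\cQ_d)|=\sum\sum\E\,\ind_{\{\conv\BS_d\in\cF(\cQ_d)\}}=\sum\sum\P[\conv\BS_d\in\cF(\cQ_d)]$, which is precisely the asserted formula; the facet probabilities appearing on the right are exactly those evaluated in Theorem~\ref{2048}.

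For $\cQ_d^0$ the same recipe is applied to~\eqref{01151}, whose right-hand side now carries two sums: the first, indexed by compositions with $\sum_l k_l=d$, accounts for facets spanned by $d$ walk vertices $\conv\BS_d$, while the second, indexed by compositions with $\sum_l k_l=d-1$, accounts for facets spanned by the origin together with $d-1$ walk vertices $\conv\BS_d^0$. Passing to expectations turns these indicators into the two families of probabilities $\P[\conv\BS_d\in\cF(\cQ_d^0)]$ and $\P[\conv\BS_d^0\in\cF(\cQ_d^0)]$, again supplied by Theorem~\ref{2048}, which reproduces the stated expression.

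I do not expect a genuine obstacle here: all of the combinatorial content, namely that each facet of $\cQ_d$ (resp.\ $\cQ_d^0$) is represented exactly once by some tuple $\BS_d$ (resp.\ $\BS_d$ or $\BS_d^0$), is already packaged in the almost-sure relations~\eqref{1151} and~\eqref{01151}, and all of the analytic content sits in Theorems~\ref{2047} and~\ref{2048}. Since each index set is finite, the interchange of summation and expectation is mere linearity and needs no Fubini/Tonelli justification; the only thing to note is the elementary identity $\E\,\ind_A=\P[A]$ identifying each expected indicator with the corresponding facet probability.
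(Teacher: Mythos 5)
Your proposal is correct and is exactly the paper's own argument: specialize the almost-sure decompositions~\eqref{1151} and~\eqref{01151} to $g\equiv 1$, take expectations, and use linearity over the finite index set together with $\E\,\ind_A=\P[A]$ to identify the summands with the facet probabilities of Theorem~\ref{2048}. Nothing is missing; the paper merely states this in one sentence where you spell out the details.
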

\begin{proof}
The theorem readily follows from~\eqref{1151} and~\eqref{01151} with $g\equiv1$ by taking the expectation.
\end{proof}

\subsection{Facets containing the origin: distribution-free formula}
It follows from the proof of Theorem~\ref{2047} (see~\eqref{2134}) that~\eqref{2137} holds not only for the normally distributed steps, but for any symmetric continuously distributed ones. Thus we have the following distribution-free formula:
\begin{align*}
    \E|\cF^0({\cQ_{d}^0})|=2&\sum_{\substack{k_1+\dots+k_m=d-1 \\ 0\leq k_l\leq n_l, \,l=1,\dots,m}}\quad \sum_{\substack{1\leq i_1^{(l)}<\dots< i_{k_l}^{(l)}\leq n_l \\ l=1,\dots,m\,:\, k_l>0}}
    \\
    &\prod_{l=1}^m\Bigg[\frac1{i_1^{(l)}}\frac1{i_2^{(l)}-i_1^{(l)}}\dots\frac1{i_{k_l}^{(l)}-i_{k_{l}-1}^{(l)}}\frac{(2(n_l-i^{(l)}_{k_l})-1)!!}{(2(n_l-i^{(l)}_{k_l}))!!}\Bigg],
\end{align*}
where $\cF^0(\cdot)$ stands for the set of facets containing the origin as a vertex (empty if 0 is not a vertex).

\subsection{Expected  surface area}
In this subsection we derive  formulae for the expected surface areas (i.e.~$(d-$1$)$-dimensional content) of the boundaries of the convex hulls, $\bd{\cQ_{d}}$ and $\bd{\cQ_{d}^0}$. To this end, let us first apply Theorem~\ref{2047} with
\begin{align*}
    g(\bx_1,\dots,\bx_d)=|\conv(\bx_1,\dots,\bx_d)|
\end{align*}
and then Proposition~\ref{1157} with $p=2$ together with  the Legendre duplication formula
\begin{align}\label{2228}
     (d-1)!=\Gamma(d)=\frac{2^{d-1}}{\sqrt\pi}\Gamma\left(\frac{d}{2}\right)\Gamma\left(\frac{d+1}{2}\right).
\end{align}

We obtain that for any $\BS_{d}$ defined in~\eqref{1212} with some integers from~\eqref{1207} and indices from~\eqref{1208},
\begin{align}\label{855}
	 \E&\big[|\conv\BS_{d}|\ind_{\lbrace\conv\BS_{d}\in\cF({\cQ_{d}})\rbrace}\big]
	 = \frac{1}{2^{d/2-1}\Gamma\big(\frac d2\big)}
	 \sum_{k_l \ne0}\frac1{i_1^{(l)}}
	 \\\notag
	 &\hphantom{=}
	 \times\prod_{k_l \ne0}\Bigg[\sqrt{i_1^{(l)}}\cdot\frac1{\sqrt{i_2^{(l)}-i_1^{(l)}}}\dots\frac1{\sqrt{i_{k_l}^{(l)}-i_{k_{l}-1}^{(l)}}}\frac{(2(n_l-i^{(l)}_{k_l})-1)!!}{(2(n_l-i^{(l)}_{k_l}))!!}\Bigg]\notag
	 \\\notag
	 &\hphantom{=}
	 \times\int_{-\infty}^\infty
	 \exp\bigg(-\frac {r^2}{2}\sum_{k_l \ne0}\frac{1}{i_1^{(l)}}\bigg)
	 \prod_{k_l=0}p_{n_l}(r)\prod_{k_l \ne0}q_{i_1^{(l)}}(r)  \dd r
\end{align} 
and
\begin{align}\label{856}
	 \E&\big[|\conv\BS_{d}|\ind_{\lbrace\conv\BS_{d}\in\cF({\cQ_{d}^0})\rbrace}\big]
	 = \frac{1}{2^{d/2-1}\Gamma\big(\frac d2\big)}
	 \sum_{k_l \ne0}\frac1{i_1^{(l)}}
	 \\\notag
	 &\hphantom{=}
	 \times\prod_{k_l \ne0}\Bigg[\sqrt{i_1^{(l)}}\cdot\frac1{\sqrt{i_2^{(l)}-i_1^{(l)}}}\dots\frac1{\sqrt{i_{k_l}^{(l)}-i_{k_{l}-1}^{(l)}}}\frac{(2(n_l-i^{(l)}_{k_l})-1)!!}{(2(n_l-i^{(l)}_{k_l}))!!}\Bigg]\notag
	 \\\notag
	 &\hphantom{=}
	 \times\int_{0}^\infty
	 \exp\bigg(-\frac {r^2}{2}\sum_{k_l \ne0}\frac{1}{i_1^{(l)}}\bigg)
	 \prod_{k_l=0}p_{n_l}(r)\prod_{k_l \ne0}q_{i_1^{(l)}}(r)  \dd r,
\end{align} 
and also for any $\BS_{d}^0$ defined in~\eqref{01212} with some integers from~\eqref{01207} and indices from~\eqref{01208},  
\begin{align}\label{857}
	 \E&\big[|\conv\BS_{d}^0|\ind_{\lbrace\conv\BS_{d}^0\in\cF({\cQ_{d}^0})\rbrace}\big]= \frac{\sqrt{2\pi}}{2^{d/2-1}\Gamma\big(\frac d2\big)}
	 \\\notag
	 &\times\prod_{l=1}^m\Bigg[\frac1{\sqrt{i_1^{(l)}}}\frac1{\sqrt{i_2^{(l)}-i_1^{(l)}}}\dots\frac1{\sqrt{i_{k_l}^{(l)}-i_{k_{l}-1}^{(l)}}}\frac{(2(n_l-i^{(l)}_{k_l})-1)!!}{(2(n_l-i^{(l)}_{k_l}))!!}\Bigg],
\end{align}     
with the convention that  the factor in the product equals ${(2n_l-1)!!}/{(2n_l)!!}$ for $k_l=0$. 

Now we are ready to derive the desired formula.
\begin{theorem}\label{1221}
We have
\begin{align*}
    \E|\bd{\cQ_{d}}|=\sum_{\substack{k_1+\dots+k_m=d \\ 0\leq k_l\leq n_l, \,l=1,\dots,m}}\quad \sum_{\substack{1\leq i_1^{(l)}<\dots< i_{k_l}^{(l)}\leq n_l \\ l=1,\dots,m\,:\, k_l>0}}\E\big[|\conv\BS_{d}|\ind_{\lbrace\conv\BS_{d}\in\cF({\cQ_{d}})\rbrace}\big],
\end{align*}
and
\begin{align*}
    \E|\bd{\cQ_{d}^0}|=&\sum_{\substack{k_1+\dots+k_m=d \\ 0\leq k_l\leq n_l, \,l=1,\dots,m}}\quad \sum_{\substack{1\leq i_1^{(l)}<\dots< i_{k_l}^{(l)}\leq n_l \\ l=1,\dots,m\,:\, k_l>0}}\notag\E\big[|\conv\BS_{d}|\ind_{\lbrace\conv\BS_{d}\in\cF({\cQ_{d}^0})\rbrace}\big]
    \\\notag&+\sum_{\substack{k_1+\dots+k_m=d-1 \\ 0\leq k_l\leq n_l, \,l=1,\dots,m}}\quad \sum_{\substack{1\leq i_1^{(l)}<\dots< i_{k_l}^{(l)}\leq n_l \\ l=1,\dots,m\,:\, k_l>0}} \E\big[|\conv\BS_{d}^0|\ind_{\lbrace\conv\BS_{d}^0\in\cF({\cQ_{d}^0})\rbrace}\big],
\end{align*}
where the summands in the right-hand sides are calculated in~\eqref{855},~\eqref{856}, and~\eqref{857}.
\end{theorem}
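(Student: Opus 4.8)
The plan is to recognize the surface area as a sum of facet volumes and then feed the $(d-1)$-dimensional volume functional into the two master identities \eqref{1151} and \eqref{01151}. Since $\bd\cQ_{d}$ is the union of its facets, each a $(d-1)$-dimensional simplex meeting the others only along lower-dimensional faces, additivity of $(d-1)$-dimensional content gives the pointwise identity $|\bd\cQ_{d}|=\sum_{F\in\cF(\cQ_{d})}|F|$, and likewise $|\bd\cQ_{d}^0|=\sum_{F\in\cF(\cQ_{d}^0)}|F|$. The natural choice is therefore $g(\bx_1,\dots,\bx_d):=|\conv(\bx_1,\dots,\bx_d)|$, the $(d-1)$-volume of the simplex spanned by the arguments. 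This $g$ is non-negative, symmetric in its $d$ arguments, measurable, and invariant under translations and rotations, so it belongs to the class for which the combinatorial identities \eqref{1151} and \eqref{01151} hold almost surely (these are pointwise identities, requiring no boundedness). With the paper's convention $g(F)=g(\bx_1^F,\dots,\bx_d^F)$, one has $g(F)=|F|$.

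First I would substitute this $g$ into \eqref{1151}. The left-hand side becomes $\sum_{F\in\cF(\cQ_{d})}|F|=|\bd\cQ_{d}|$, so taking expectations yields
\begin{align*}
    \E|\bd\cQ_{d}|=\E\Bigg[\sum_{\substack{k_1+\dots+k_m=d \\ 0\leq k_l\leq n_l,\, l=1,\dots,m}}\ \sum_{\substack{1\leq i_1^{(l)}<\dots< i_{k_l}^{(l)}\leq n_l \\ l=1,\dots,m\,:\, k_l>0}} |\conv\BS_{d}|\,\ind_{\lbrace\conv\BS_{d}\in\cF(\cQ_{d})\rbrace}\Bigg].
\end{align*}
The double sum is finite, since there are finitely many tuples $(k_1,\dots,k_m)$ with $\sum_l k_l=d$ and $0\le k_l\le n_l$, and finitely many admissible index choices; hence expectation passes through the sum term by term. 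Each resulting summand $\E[|\conv\BS_{d}|\,\ind_{\lbrace\conv\BS_{d}\in\cF(\cQ_{d})\rbrace}]$ is exactly the quantity evaluated in \eqref{855}, which gives the first asserted formula.

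For the convex hull with the origin I would repeat the argument starting from \eqref{01151}. Substituting the same $g$, the left-hand side is $|\bd\cQ_{d}^0|$, while the right-hand side splits into the family indexed by $k_1+\dots+k_m=d$, governed by $\ind_{\lbrace\conv\BS_{d}\in\cF(\cQ_{d}^0)\rbrace}$, and the family indexed by $k_1+\dots+k_m=d-1$, governed by $\ind_{\lbrace\conv\BS_{d}^0\in\cF(\cQ_{d}^0)\rbrace}$. Taking expectations and again commuting with the two finite sums, the summands of the first family are supplied by \eqref{856} and those of the second by \eqref{857}, which assembles into the stated expression for $\E|\bd\cQ_{d}^0|$.

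The only genuine subtlety sits upstream, in the derivation of \eqref{855}--\eqref{857} themselves: these are obtained by applying Theorem~\ref{2047} with the \emph{unbounded} functional $g=|\conv(\cdot)|$, whereas Theorem~\ref{2047} is stated for bounded $g$. I expect this to be the main point requiring care, and I would handle it by truncation---applying Theorem~\ref{2047} to the bounded, still translation- and rotation-invariant functionals $g\wedge M$ and letting $M\to\infty$ by monotone convergence. The passage is legitimate because the right-hand factor in \eqref{2140} becomes $\E[(|\conv P_d\BS_{d}|\wedge M)\,|\conv P_d\BS_{d}|]\uparrow\E|\conv P_d\BS_{d}|^2$, which is finite by Proposition~\ref{1157} with $p=2$; thus both sides converge and \eqref{855}--\eqref{857} hold with finite right-hand sides. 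Granting those, Theorem~\ref{1221} reduces to the purely mechanical assembly described above.
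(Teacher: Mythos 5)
Your proposal is correct and follows essentially the same route as the paper: the paper likewise obtains \eqref{855}--\eqref{857} by applying Theorem~\ref{2047} with $g=|\conv(\cdot)|$ and Proposition~\ref{1157} with $p=2$, and then proves Theorem~\ref{1221} as a direct corollary of the pointwise identities \eqref{1151} and \eqref{01151} by taking expectations of the finite sums. Your truncation argument ($g\wedge M$ with monotone convergence, using finiteness of $\E|\conv P_d\BS_{d}|^2$) is a worthwhile addition, since the paper applies Theorem~\ref{2047}, stated for bounded $g$, to the unbounded volume functional without comment.
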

\begin{proof}
The theorem is a direct corollary of~\eqref{1151} and~\eqref{01151}.
\end{proof}

\subsection{Expected volume and other intrinsic volumes}
Here, using the result of the previous section we calculate the expected volumes of ${\cQ_{d}}$ and ${\cQ_{d}^0}$.
\begin{theorem}\label{2214}
We have
\begin{align*}
    \E&|{\cQ_{d}}|=\frac{\Gamma\big(\frac{d+1}{2}\big)}{d\sqrt{\pi}\,\Gamma\big(\frac{d}{2}\big)}
    \\
    &\times\sum_{\substack{k_1+\dots+k_m=d+1 \\ 0\leq k_l\leq n_l, \,l=1,\dots,m}}\quad \sum_{\substack{1\leq i_1^{(l)}<\dots< i_{k_l}^{(l)}\leq n_l \\ l=1,\dots,m\,:\, k_l>0}}\E\big[|\conv\BS_{d+1}|\ind_{\lbrace\conv\BS_{d+1}\in\cF({\cQ_{d+1}})\rbrace}\big],
\end{align*}
and
\begin{align*}
    \E&|{\cQ_{d}^0}|=\frac{\Gamma\big(\frac{d+1}{2}\big)}{d\sqrt{\pi}\,\Gamma\big(\frac{d}{2}\big)}
    \\
    &\times\bigg(\sum_{\substack{k_1+\dots+k_m=d+1 \\ 0\leq k_l\leq n_l, \,l=1,\dots,m}}\quad \sum_{\substack{1\leq i_1^{(l)}<\dots< i_{k_l}^{(l)}\leq n_l \\ l=1,\dots,m\,:\, k_l>0}}\notag\E\big[|\conv\BS_{d+1}|\ind_{\lbrace\conv\BS_{d+1}\in\cF({\cQ_{d+1}^0})\rbrace}\big]
    \\\notag
    &\hphantom{\times\bigg(}+\sum_{\substack{k_1+\dots+k_m=d \\ 0\leq k_l\leq n_l, \,l=1,\dots,m}}\quad \sum_{\substack{1\leq i_1^{(l)}<\dots< i_{k_l}^{(l)}\leq n_l \\ l=1,\dots,m\,:\, k_l>0}} \E\big[|\conv\BS_{d+1}^0|\ind_{\lbrace\conv\BS_{d+1}^0\in\cF({\cQ_{d+1}^0})\rbrace}\big]\bigg),
\end{align*}
where the summands in the right-hand sides are calculated in~\eqref{855},~\eqref{856}, and~\eqref{857} with $d$ replaced by $d+1$.
\end{theorem}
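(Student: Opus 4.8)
The plan is to reduce the expected volume in dimension $d$ to the expected surface area in dimension $d+1$, which Theorem~\ref{1221} already computes. First I would lift each walk to $\R^{d+1}$ by appending to every step $X_i^{(l)}$ an independent standard Gaussian coordinate; the lifted steps are again standard Gaussian vectors, now in $\R^{d+1}$. Denote by $\cQ_{d+1}$ and $\cQ_{d+1}^0$ the resulting convex hulls. Their orthogonal projections onto the first $d$ coordinates are exactly $\cQ_{d}$ and $\cQ_{d}^0$ (the origin is fixed by projection). Since the lifted configuration again consists of standard Gaussian walks, Theorem~\ref{1221} applies verbatim in dimension $d+1$ and identifies the double sums on the right-hand sides of the theorem with $\E|\bd\cQ_{d+1}|$ and $\E|\bd\cQ_{d+1}^0|$. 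Thus it suffices to prove the single relation
\begin{align*}
\E|\cQ_{d}|=\frac{\Gamma\big(\frac{d+1}{2}\big)}{d\sqrt{\pi}\,\Gamma\big(\frac{d}{2}\big)}\,\E|\bd\cQ_{d+1}|,
\end{align*}
together with its counterpart for $\cQ_{d}^0$.

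Next I would set up the shadow (projection) identity. For a polytope $\cQ_{d+1}$ in general position, which holds almost surely because the steps are absolutely continuous, every facet $F$ has a unit outer normal $\nu_F$ with nonzero last coordinate, and the facets split into lower ones ($\langle\nu_F,e_{d+1}\rangle<0$) and upper ones ($\langle\nu_F,e_{d+1}\rangle>0$). The lower facets tile the shadow $\cQ_{d}$, the upper facets tile it again, and the $d$-volume of the projection of a facet $F$ equals $|F|\,|\langle\nu_F,e_{d+1}\rangle|$. Adding the two tilings gives
\begin{align*}
2\,|\cQ_{d}|=\sum_{F\in\cF(\cQ_{d+1})}|F|\,\big|\langle\nu_F,e_{d+1}\rangle\big|,
\end{align*}
and the same argument applied to $\cQ_{d+1}^0$ (whose projection is $\cQ_{d}^0$) yields the identical formula with $\cQ_{d+1}$ replaced by $\cQ_{d+1}^0$.

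The decisive step is to average out the normal direction. Because the lifted steps are standard Gaussian in $\R^{d+1}$ and rotations fix the origin, the law of $\cQ_{d+1}$ (and of $\cQ_{d+1}^0$) is invariant under every $Q\in O(d+1)$. For the rotation-equivariant functional $\Psi(K)=\sum_{F\in\cF(K)}|F|\,h(\nu_F)$ one has $\Psi(QK)=\sum_{F}|F|\,h(Q\nu_F)$, so $\E\,\Psi(\cQ_{d+1})=\E\,\Psi(Q\cQ_{d+1})$ for each fixed $Q$; integrating over Haar measure on $O(d+1)$ replaces $h(\nu_F)$ by its spherical average. Taking $h(\nu)=|\langle\nu,e_{d+1}\rangle|$ turns the right-hand side of the shadow identity into $c_{d+1}\,\E|\bd\cQ_{d+1}|$, where
\begin{align*}
c_{d+1}=\int_{\mathbb S^{d}}\big|\langle u,e_{d+1}\rangle\big|\dd\mu(u)=\frac{2\,\Gamma\big(\frac{d+1}{2}\big)}{d\sqrt{\pi}\,\Gamma\big(\frac{d}{2}\big)},
\end{align*}
the last equality being a one-line Beta-integral computation using that the last coordinate of a uniform point on $\mathbb S^{d}$ has density proportional to $(1-t^2)^{(d-2)/2}$. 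Combining with the shadow identity gives $2\,\E|\cQ_{d}|=c_{d+1}\,\E|\bd\cQ_{d+1}|$, i.e.\ the desired relation, and likewise for $\cQ_{d}^0$; expanding the two surface areas by Theorem~\ref{1221} in dimension $d+1$ then produces precisely the two stated formulae.

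I expect the main obstacle to be the rigorous justification of the shadow identity and of the rotation-averaging rather than the final constant: one must check that almost surely there are no vertical facets, that the lower and upper boundaries each project bijectively onto $\cQ_{d}$ up to a null set, and that the facet-indicators together with the normals transform consistently under $Q$ so that $\Psi$ is genuinely rotation-equivariant. Once these measure-theoretic points are settled, the evaluation of $c_{d+1}$ and the bookkeeping against Theorem~\ref{1221} are routine.
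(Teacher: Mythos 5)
Your proof is correct, and it reaches the theorem by the same reduction as the paper: both establish the relation $\E|\cQ_{d}|=\frac{\Gamma\left(\frac{d+1}{2}\right)}{d\sqrt{\pi}\,\Gamma\left(\frac{d}{2}\right)}\,\E|\bd\,\cQ_{d+1}|$ (and its analogue for $\cQ_{d}^0$) and then expand $\E|\bd\,\cQ_{d+1}|$, $\E|\bd\,\cQ_{d+1}^0|$ by Theorem~\ref{1221} in dimension $d+1$. The difference is in how this relation is obtained. The paper notes that for every fixed $\bu$ the projected walks $\big(P_{\bu^\perp}S_i^{(l)}\big)$ are again standard Gaussian walks, hence $P_{\bu^\perp}\cQ_{d+1}\eqdistr\cQ_{d}$, and then simply quotes the integrated Cauchy surface area formula; given that reference, the argument is two lines. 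You instead fix the single direction $e_{d+1}$: you prove the polytopal shadow identity $2\,|P_{e_{d+1}^\perp}\cQ_{d+1}|=\sum_{F}|F|\,|\langle\nu_F,e_{d+1}\rangle|$ via the upper/lower facet tiling, and then restore the spherical average by Haar-averaging over $O(d+1)$, using rotational invariance of the law of the hull --- in effect re-deriving the expectation form of Cauchy's formula rather than citing it. Your constant is right: $\int_{\mathbb S^{d}}|\langle u,e_{d+1}\rangle|\,\mu(\dd u)=\frac{2\,\Gamma\left(\frac{d+1}{2}\right)}{d\sqrt{\pi}\,\Gamma\left(\frac{d}{2}\right)}$, and after the factor $\frac12$ from the shadow identity this matches the paper's constant, since $\Gamma\left(\frac{d}{2}+1\right)=\frac{d}{2}\Gamma\left(\frac{d}{2}\right)$. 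What your route buys is self-containedness and an explicit facet-by-facet picture; what it costs is precisely the measure-theoretic checking you flag (almost surely no vertical facets, and that the lower and upper boundaries each tile the shadow), which the citation route bypasses entirely --- though these checks do go through here because the steps are absolutely continuous.
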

\begin{proof}
It is well known that  the orthogonal projection of the standard Gaussian distribution onto a linear subspace is again the standard Gaussian distribution of corresponding dimension. Thus  for any fixed $\bu\in\mathbb S^d$,
\begin{align*}
    (P_{\bu^\perp}S_i^{(1)})_{i=1}^{n_1},\dots, (P_{\bu^\perp}S_i^{(m)})_{i=1}^{n_m}
\end{align*}
can be considered as a collection of independent Gaussian random walks in $\R^{d-1}$. In particular, we may think that
\begin{align*}
    P_{\bu^\perp}\,{\cQ_{d}}\eqdistr{\cQ_{d-1}}.
\end{align*}
Recalling the Cauchy surface area formula (see, e.g.,~\cite[Eq.~(6.12)]{SW08}) leads here to
\begin{align*}
    \E|\bd\,{\cQ_{d}}|=\E\frac{2\sqrt{\pi}\,\Gamma\big(\frac{d+1}{2}\big)}{\Gamma\big(\frac{d}{2}\big)}\int_{\mathbb S^{d-1}}|P_{\bu^\perp}\,{\cQ_{d}}|\,\mu(\mathrm{d}\bu)=\frac{2\sqrt{\pi}\,\Gamma\big(\frac{d+1}{2}\big)}{\Gamma\big(\frac{d}{2}\big)}\E|{\cQ_{d-1}}|,
\end{align*}

\begin{align*}
    \E|{\cQ_{d-1}}|=\frac{\Gamma\big(\frac{d}{2}\big)}{2\sqrt{\pi}\,\Gamma\big(\frac{d+1}{2}\big)}\E|\bd\,{\cQ_{d}}|,
\end{align*}
and applying Theorem~\ref{1221}  concludes the proof.
\end{proof}
Similarly, by means of the Kubota formula, it is possible to calculate all expected intrinsic volumes of ${\cQ_{d}}$ and ${\cQ_{d}^0}$. We skip the details here.

\section{Examples}\label{2129}

In general, the factor in the right-hand sides of~\eqref{2140} and~\eqref{3140}
\begin{align}\label{1249}
    I_1:=\int_{-\infty}^\infty \exp\bigg(-\frac {r^2}{2}\sum_{l\,:\, k_l\ne0}\frac{1}{i_1^{(l)}}\bigg)\prod_{l\,:\,k_l =0}p_{n_l}(r)\prod_{l\,:\,k_l \ne0}q_{i_1^{(l)}}(r)  \dd r,
    \\\notag
     I_2:=\int_{0}^\infty \exp\bigg(-\frac {r^2}{2}\sum_{l\,:\, k_l\ne0}\frac{1}{i_1^{(l)}}\bigg)\prod_{l\,:\,k_l =0}p_{n_l}(r)\prod_{l\,:\,k_l \ne0}q_{i_1^{(l)}}(r)  \dd r
\end{align}
cannot be simplified. However, there are several cases when they can.
\subsection{Single random walk} 
Suppose that $m=1$. Using the notation of Subsection~\ref{1205} we have
\begin{align*}
    \cQ_d=\conv(S_1,\dots,S_n),\quad\cQ_d^0=\conv(S_0,S_1,\dots,S_n).
\end{align*}
Up to translations, $\cQ_d$ with $n$ steps has the same distribution as $\cQ_d^0$ with $n-1$ steps, so we can consider $\cQ_d$ only.

As in~\eqref{1207}, fix $d$ indices 
\begin{align*}
    1\leq i_1<\dots<i_d\leq n
\end{align*}
and let
\begin{align*}
    \BS_{d}:=\bigg(S_{i_1},\dots, S_{i_d}\bigg),\quad  \conv\BS_{d}:=\conv\left\{S_{i_1},\dots, S_{i_d}\right\}.
\end{align*}
In this case, $I_1,I_2$ defined in~\eqref{1249} reduce to
\begin{align*}
    I_1&=\int_{-\infty}^\infty \exp\bigg(-\frac {r^2}{2i_1}\bigg)q_{i_1}(r)  \dd r=\sqrt{2\pi i_1}p_{i_1-1}(0)=\sqrt{2\pi i_1}\frac{(2i_1-3)!!}{(2i_1-2)!!},
    \\
    I_2&=\int_{0}^\infty \exp\bigg(-\frac {r^2}{2i_1}\bigg)q_{i_1}(r)  \dd r=\sqrt{2\pi i_1}p_{i_1}(0)=\sqrt{2\pi i_1}\frac{(2i_1-1)!!}{(2i_1)!!},
\end{align*}
where we used~\eqref{1907} and~\eqref{1350}, the law of total probability, and~\eqref{1900}. Thus, it follows from Theorem~\ref{2148} and Theorem~\ref{2048} that
\begin{align*}
	 \E |\cF(\cQ_{d})|
	 =2\sum_{1\leq i_1<\dots<i_d\leq n}
	 \frac1{i_2-i_1}\dots\frac1{i_d-i_{d-1}}\frac{(2(n-i_{d})-1)!!}{(2(n-i_d))!!}\frac{(2i_1-3)!!}{(2i_1-2)!!}.
\end{align*}   
After a change of variables
\begin{align*}
    j=i_1,  \,\, \, j_1=i_2-i_1,\,\,\,\dots, \,\, \,  j_{d-1}=i_d-i_{d-1}, 
\end{align*}
we obtain
\begin{align*}
	 \E |\cF(\cQ_{d})|&= 2\sum_{\substack{j_1+\dots+j_{d-1}\leq n-1 \\ j_1,\dots,j_{d-1}\geq1}} 
	 \frac1{j_1}\dots\frac1{j_{d-1}}
	 \\
	 &\times\sum_{j=1}^{n-j_1-\dots-j_{d-1}}
	\frac{(2(n-j_1-\dots-j_{d-1}-j)-1)!!}{(2(n-j_1-\dots-j_{d-1}-j))!!}\frac{(2j-3)!!}{(2j-2)!!}
	\\
	&= 2\sum_{\substack{j_1+\dots+j_{d-1}\leq n-1 \\ j_1,\dots,j_{d-1}\geq1}} 
	 \frac1{j_1\dots j_{d-1}},
\end{align*}   
where in the last step we used~\eqref{2011}, and thus we recover~\eqref{2238} with $n$ replaced by $n-1$.

In the same way, it is not difficult to show that in the case of a single random walk  Theorem~\ref{2214} turns to the formula
\begin{align*}
	 \E|\cQ_{d}|= \frac{1}{d!}\sum_{\substack{j_1+\dots+j_{d}\leq n-1 \\ j_1,\dots,j_{d}\geq1}} 
	 \frac1{\sqrt{j_1\dots j_{d}}},
\end{align*}  
see~\cite[Section~4.1]{VZ18}.

\subsection{Gaussian polytopes} 
Suppose that $n_1=\dots=n_m=1$. Using the notation of Subsection~\ref{1205} we have
\begin{align*}
    \cQ_d=\conv(X^{(1)},X^{(2)}\dots,X^{(m)}),\quad\cQ_d^0=\conv(0,X^{(1)},X^{(2)}\dots,X^{(m)}),
\end{align*}
which are known as the \emph{Gaussian polytope} and the \emph{Gaussian polytope with~$0$} resp.

In this case, due to~\eqref{3}, $I_1$ and $I_2$ (defined in~\eqref{1249}) turn to
\begin{align*}
    I_1=(2\pi)^{d/2}\int_{-\infty}^\infty\,\Phi^{m-d}(r)\,\varphi^{d}(r)\dd r,\quad I_2=(2\pi)^{d/2}\int_{0}^\infty\,\Phi^{m-d}(r)\,\varphi^{d}(r)\dd r,
\end{align*}
Applying Theorem~\ref{2148} readily gives the number of the averages facets for $\cQ_d$ and $\cQ_d^0$:
\begin{align*}
	 \E |\cF(\cQ_{d})| = 2\sqrt{d}\,(2\pi)^{\frac{d-1}{2}}\binom{m}{d}\,\int_{-\infty}^{+\infty}\,\Phi^{m-d}(r)\,\varphi^{d}(r)\dd r
\end{align*}   
and
\begin{align*}
	 \E |\cF(\cQ_{d}^0)| =\frac{\binom{m}{d-1}}{2^{m-d-1}}+ 2\sqrt{d}\,(2\pi)^{\frac{d-1}{2}}\binom{m}{d}\,\int_{0}^{+\infty}\,\Phi^{m-d}(r)\,\varphi^{d}(r)\dd r.
\end{align*}  
The first formula has been obtained in~\cite{hR70} and later generalized in~\cite{HMR04} to faces of all dimensions. The second formula seems to be new.

Similarly,  from Theorem~\ref{2214} we readily obtain the formulae
\begin{align*}
 \E&|{\cQ_{d}}|=\binom{m}{d+1}\,\frac{(d+1)\pi^{d/2}}{\Gamma\big(\frac{d}{2}+1\big)}\int_{-\infty}^{+\infty}\,\Phi^{m-d-1}(r)\,\varphi^{d+1}(r)\dd r
\end{align*}
and
\begin{align*}
    \E|{\cQ_{d}^0}|=
     \frac{\binom{m}{d}}{2^{m-\frac{d}{2}}\Gamma\left(\frac{d}{2}+1\right)}
    +\binom{m}{d+1}\,\frac{(d+1)\pi^{d/2}}{\Gamma\big(\frac{d}{2}+1\big)}\int_0^\infty\,\Phi^{m-d-1}(r)\,\varphi^{d+1}(r)\dd r,
\end{align*}
which have been recently obtained in~\cite{KZ19}.

\section{Proof of Theorem~\ref{2047}}\label{2139}
\subsection{Stochastic form of Blaschke--Petkantschin formula}
We  need to  integrate a non-negative measurable  function of $(d-1)$-tuples of points in $\R^d$. For some reasons, it is more convenient  to integrate this function first over the $(d-1)$-tuples of points in a fixed linear hyperplane and then integrate over the set of all hyperplanes. The corresponding transformation formula is known as the linear Blaschke--Petkantschin formula (see~ \cite[Theorem 7.2.1]{SW08}):
\begin{align}\label{951}
    &\int_{(\R^d)^{d-1}}{h(\bx_1,\ldots,\bx_{d-1})\dd\bx_1\ldots \dd\bx_{d-1}}
    \\&\quad=\frac{d!\kappa_d}{2}\int_{\mathbb S^{d-1}} \int_{(\bu^\perp)^{d-1}}h(\bx_1,\ldots,\bx_{d-1})|\conv(0,\bx_1,\ldots,\bx_{d-1})|\notag
    \\&\hphantom{\quad=\frac{d!\kappa_d}{2}\int_{\mathbb S^{d-1}} \int_{(\bu^\perp)^{d}}}\times\lambda_{\bu^\perp}(\dd\bx_1)\ldots\lambda_{\bu^\perp}(\dd\bx_{d-1}) \,\dd\mu(\dd\bu)\notag,
\end{align}
where we write $\kappa_d = \pi ^{d/2}/\Gamma (\frac{d}{2}+1)$ for the volume of the $d$-dimensional unit ball.

A similar affine version (see~\cite[Theorem 7.2.7]{SW08} combined with ~\cite[Theorem~13.2.12.]{SW08}) may be stated as follows:
\begin{align}\label{1817}
    &\int_{(\R^d)^{d}}{h(\bx_1,\ldots,\bx_d)\dd\bx_1\ldots \dd\bx_d}
    \\&\quad={d!\kappa_d}\int_{\mathbb S^{d-1}} \int_{0}^{\infty}\int_{(\bu^\perp+r\bu)^{d}}h(\bx_1,\ldots,\bx_d)|\conv(\bx_1,\ldots,\bx_d)|\notag
    \\&\hphantom{\quad=\frac{d!\kappa_d}{2}\int_{\mathbb S^{d-1}} \int_{0}^{\infty}\int_{(\bu^\perp+r\bu)^{d}}}\times\lambda_{\bu^\perp}(\dd\bx_1)\ldots\lambda_{\bu^\perp}(\dd\bx_d)\,\lambda(\dd r) \,\dd\mu(\dd\bu)\notag
    \\&\quad={d!\kappa_d} \int_{\mathbb S^{d-1}} \int_{0}^{\infty}\int_{(\bu^\perp)^{d}}h(\bx_1+r\bu,\ldots,\bx_d+r\bu)|\conv(\bx_1,\ldots,\bx_d)|\notag
    \\&\hphantom{\quad=\frac{d!\kappa_d}{2} \int_{\mathbb S^{d-1}} \int_{0}^{\infty}\int_{(\bu^\perp)^{d}}}\times\lambda_{\bu^\perp}(\dd\bx_1)\ldots\lambda_{\bu^\perp}(\dd\bx_d)\,\lambda(\dd r) \,\dd\mu(\dd\bu).\notag
\end{align}
Using the Legendre duplication formula (see~\eqref{2228} with $d-1$ replaced by $d$) it is convenient to modify the factor in the right-hand side as follows:
\begin{align}\label{2234}
    d!\kappa_d=2^d\pi^{\frac{d-1}{2}}\Gamma\left(\frac{d+1}{2}\right).
\end{align}
We will also use the following version of the law of total expectation: for any random vectors $X_1,\dots,X_n\in\R^d$ having a joint distribution density, any $k\in\{1,\dots, n\}$ and  any non-negative measurable functions $h_1:(\R^d)^k\to\R$ and $h_2:(\R^d)^n\to\R$ we have
\begin{align*}
     \E[h_1(X_1,\dots,&X_k)h_2(X_1,\dots,X_n)]=\\&\int_{(\R^d)^k}\E[h_2(X_1,\dots,X_n)\given(X_1,\dots,X_k)=(\bx_1,\dots,\bx_k)]
     \\&\hphantom{\int_{(\R^d)^k}}\times  h_1(\bx_1,\dots,\bx_k) f_{k}(\bx_1,\dots,\bx_k)\dd \bx_1\dots \dd \bx_k,
\end{align*}
where $f_k$ is the joint distribution density of $X_1,\dots,X_k$.

Combining this with~\eqref{951} for $k=d-1$ and with~\eqref{1817}  for $k=d$ (we assume that $n\geq d$) and using Fubini's theorem together with~\eqref{2234} leads to
\begin{align}\label{2124}
    &\E[h_1(X_1,\dots,X_{d-1})h_2(X_1,\dots,X_n)]=2^{d-1}\pi^{\frac{d-1}{2}}\Gamma\left(\frac{d+1}{2}\right)
    \\\notag&\times\int_{\mathbb S^{d-1}} \int_{(\bu^\perp)^{d-1}}\E[h_2(X_1,\dots,X_n)\given(X_1,\dots,X_{d-1})=(\bx_1,\dots,\bx_{d-1})]
     \\&\hphantom{\times\int_{\mathbb S^{d-1}} \int_{(\bu^\perp)^{d}}}\times  h_1(\bx_1,\dots,\bx_{d-1}) f_{d-1}(\bx_1,\dots,\bx_{d-1})|\conv(0,\bx_1,\ldots,\bx_{d-1})|\notag
     \\&\hphantom{\times\int_{\mathbb S^{d-1}} \int_{(\bu^\perp)^{d}}}
   \times\lambda_{\bu^\perp}(\dd\bx_1)\ldots\lambda_{\bu^\perp}(\dd\bx_{d-1}) \,\dd\mu(\dd\bu)\notag
\end{align}
and
\begin{align}\label{2236}
    &\E[h_1(X_1,\dots,X_{d})h_2(X_1,\dots,X_n)]=2^d\pi^{\frac{d-1}{2}}\Gamma\left(\frac{d+1}{2}\right) 
    \\\notag&\times\int_{\mathbb S^{d-1}} \int_{0}^{\infty}\int_{(\bu^\perp)^{d}}\E[h_2(X_1,\dots,X_n)\given(X_1,\dots,X_{d})=(\bx_1+r\bu,\dots,\bx_{d}+r\bu)]
     \\&\hphantom{\times\int_{\mathbb S^{d-1}} \int_{0}^{\infty}\int_{(\bu^\perp)^{d}}}\times  h_1(\bx_1+r\bu,\dots,\bx_{d}+r\bu) f_{d-1}(\bx_1+r\bu,\dots,\bx_{d}+r\bu)\notag
     \\&\hphantom{\times \int_{\mathbb S^{d-1}} \int_{0}^{\infty}\int_{(\bu^\perp)^{d}}}\times|\conv(\bx_1,\ldots,\bx_{d})|\lambda_{\bu^\perp}(\dd\bx_1)\ldots\lambda_{\bu^\perp}(\dd\bx_d)\,\lambda(\dd r) \,\dd\mu(\dd\bu).\notag
\end{align}

Now we are ready to prove the theorem. It is convenient to start with the proof of the last relation.

\subsection{Proof of~\eqref{1152}} It follows from~\eqref{2124} and  the translation invariance of $g$ that
\begin{align}\label{1026}
	 &\E[g(\BS_{d}^0)\ind_{\lbrace\conv\BS_{d}^0\,\in\,\cF(\cQ_d^0)\rbrace}]=2^{d-1}\pi^{\frac{d-1}{2}}\Gamma\left(\frac{d+1}{2}\right)
	 \\\notag	&\times\int_{\mathbb S^{d-1}} \int_{(\bu^\perp)^{d-1}}\P[\conv\BS_{d}^0\in\cF(\cQ_d^0)\given\BS_{d}^0=(0,\bx_1,\dots,\bx_{d-1})]\notag
	 \\&\hphantom{\times\int_{\mathbb S^{d-1}}\int_{(\bu^\perp)^{d-1}}}
	 \times  g(0,\bx_1,\dots,\bx_{d-1}) f_{\BS_{d}^0}(\bx_1,\dots,\bx_{d-1})|\conv(0,\bx_1,\ldots,\bx_{d-1})|\notag
	 \\\notag&\hphantom{\times\int_{\mathbb S^{d-1}}\int_{(\bu^\perp)^{d-1}}}
	 \times\lambda_{\bu^\perp}(\dd\bx_1)\ldots\lambda_{\bu^\perp}(\dd\bx_{d-1})\,\dd r \,\dd\mu(\dd\bu),
\end{align}
where $f_{\BS_{d}}^0$ is the joint density of $\BS_{d}^0$ (without its zero component), that is,
\begin{align}\label{1541}
    f_{\BS_{d}^0}&(\bx_1,\dots,\bx_{d-1})=(2\pi)^{-d(d-1)/2}\prod_{l\,:\,k_l\ne0} \big[i_1^{(l)}(i_2^{(l)}-i_1^{(l)}) \dots (i_{k_l}^{(l)}-i_{k_{l}-1}^{(l)})\big]^{-d/2}\notag
    \\&\times\exp\left(-\frac12\sum_{\substack{l\,:\,k_l\ne0\\ l>0}}\left[\frac{\Vert\bx_{1}^{(l)}\Vert^2}{i_1^{(l)}}+\frac{\Vert\bx_{2}^{(l)}-\bx_{1}^{(l)}\Vert^2}{i_2^{(l)}-i_1^{(l)}}+\dots+\frac{\Vert\bx_{k_l}^{(l)}-\bx_{k_{l}-1}^{(l)}\Vert^2}{i_{k_l}^{(l)}-i_{k_{l}-1}^{(l)}}\right]\right).
\end{align}
Here, to make the expression more compact, we used the notation
\begin{align*}
    \bx_{i}^{(l)}:=\bx_{k_1+\dots+k_{l-1}+i}\quad \text{\ for\ } l>0 \text{\ such that\ } k_l \ne 0.\\
\end{align*}
Let us calculate the probability under the integral in the right-hand side of~\eqref{1026}.
Assuming that $\conv\BS_{d}^0$ has non-zero $(d-$1$)$-dimensional content (and thus, $\Span\BS_{d}^0=\bu^\perp$), which holds with probability~$1$, we obtain that $\conv\BS_{d}^0\in\cF({\cQ_{d}^0})$ if and only if ${\cQ_{d}^0}$ lies in the closed half-space with boundary $\bu^\perp$. Equivalently, the values of the projections of all random walks onto $\Span\bu$ are simultaneously either non-negative or non-positive. Because of the symmetry, the latter two events have the same probability.  Thus,
\begin{align*}
	\P&[\conv\BS_{d}^0\in\cF(\cQ_d^0)\given\BS_{d}^0=(0,\bx_1,\dots,\bx_{d-1})]
	\\&=2\P\Big[P_{u}S_{1}^{(1)},\dots,P_{u}S_{n_m}^{(m)}\geq 0 \bgiven P_{u}S_{i^{(l)}_1}^{(l)}=\dots=P_{u}S_{i^{(l)}_{k_l}}^{(l)}=0\text{ for }l:k_l>0\Big]\notag
	\\\notag
	&=2\prod_{l=1}^{m}\P\Big[P_{u}S_{1}^{(l)},\dots,P_{u}S_{n_l}^{(l)}\geq 0 \bgiven P_{u}S_{i^{(l)}_1}^{(l)}=\dots=P_{u}S_{i^{(l)}_{k_l}}^{(l)}=0\Big].
\end{align*}
Here, we imply that if $k_l=0$, then the corresponding probability is unconditional, and therefore due to~\eqref{1141} equals
\begin{align*}
	\P\Big[P_{u}S_{1}^{(l)},\dots,P_{u}S_{n_l}^{(l)}\geq 0\Big]=\frac{(2n_l-1)!!}{(2n_l)!!}.
\end{align*}
Let us evaluate the conditional one. Since the increments of the random walks are independent, for  $l$ such that $k_l\ne0$,
\begin{align*}
	\P&\Big[P_{u}S_{1}^{(l)},\dots,P_{u}S_{n_l}^{(l)}\geq 0 \bgiven P_{u}S_{i^{(l)}_1}^{(l)}=\dots =P_{u}S_{i^{(l)}_{k_l}}^{(l)}=0\Big]
	\\&=\prod_{j=0}^{k_l-1}\P\Big[P_{u}S_{i^{(l)}_j+1}^{(l)},\dots,P_{u}S_{i^{(l)}_{j+1}-1}^{(l)}\geq 0 \bgiven P_{u}S_{i^{(l)}_j}^{(l)}=P_{u}S_{i^{(l)}_{j+1}}^{(l)}=0\Big]
	\\&\phantom{=}\times\P\Big[P_{u}S_{i^{(l)}_{k_l}+1}^{(l)},\dots,P_{u}S_{n_l}^{(l)}\geq 0 \bgiven P_{u} S_{i^{(l)}_{k_l}}^{(l)}=0\Big],
\end{align*}
where we write $i_0^{(l)}:=0$. It was proved by Sparre Andersen~\cite{eSA53} that the probability for a random walk bridge of length $q$ to stay non-negative (or equivalently non-positive) equals $1/q$. Thus,
\begin{align}\label{1114}
	\P\bigg[P_{u}S_{i^{(l)}_j+1}^{(l)},\dots,P_{u}S_{i^{(l)}_{j+1}-1}^{(l)}\geq 0 \given P_{u}S_{i^{(l)}_j}^{(l)}=P_{u}S^{(l)}_{i^{(l)}_{j+1}}=0\bigg]=\frac{1}{i^{(l)}_{j+1}-i^{(l)}_j}.
\end{align}
After time changing, the last part turns to the persistence probability of the symmetric random walk, so due to~\eqref{1141},
\begin{align}\label{1120}
    \P&\Big[P_{u}S_{i^{(l)}_{k_l}+1}^{(l)},\dots,P_{u}S_{n_l}^{(l)}\geq 0 \bgiven P_{u} S_{i^{(l)}_{k_l}}^{(l)}=0\Big]
    \\\notag
    &=\P\Big[P_{u}\Big(S_{i^{(l)}_{k_l}+1}^{(l)}-S_{i^{(l)}_{k_l}}^{(l)}\Big),\dots,P_{u}\Big(S_{n_l}^{(l)}-S_{i^{(l)}_{k_l}}^{(l)}\Big)\geq 0\Big]
    \\\notag
    &=\frac{\big(2\big(n_l-i^{(l)}_{k_l}\big)-1\big)!!}{\big(2\big(n_l-i^{(l)}_{k_l}\big)\big)!!}.
\end{align}
Combining the above we arrive at
\begin{align}\label{2134}
    \P&[\conv\BS_{d}^0\in\cF(\cQ_d^0)\given\BS_{d}^0=(0,\bx_1,\dots,\bx_{d-1})]
    \\
    &=2\prod_{\substack{l\,:\,k_l=0\\ l \ne 0}}\left[\frac{(2n_l-1)!!}{(2n_l)!!}\right]\prod_{\substack{l\,:\,k_l\ne0\\ l \ne 0}}\left[\frac{1}{i^{(l)}_1}\frac{1}{i^{(l)}_2-i^{(l)}_1}\dots\frac{1}{i^{(l)}_{k_l}-i^{(l)}_{k_{l}-1}}\frac{(2(n_l-i^{(l)}_{k_l})-1)!!}{(2(n_l-i^{(l)}_{k_l}))!!}\right].\notag
\end{align}
Applying this to~\eqref{1026} and using Fubini's theorem gives
\begin{align}\label{1149}
    \E[g(&\BS_{d}^0)\ind_{\lbrace\conv\BS_{d}^0\,\in\,\cF(\cQ_d)\rbrace}]=2^d\pi^{\frac{d-1}{2}}\Gamma\left(\frac{d+1}{2}\right) \prod_{\substack{l\,:\,k_l=0\\ l \ne 0}}\left[\frac{(2n_l-1)!!}{(2n_l)!!}\right]
	 \\\notag
	 & \times\prod_{\substack{l\,:\,k_l\ne0\\ l \ne 0}}\left[\frac{1}{i^{(l)}_1}\frac{1}{i^{(l)}_2-i^{(l)}_1}\dots\frac{1}{i^{(l)}_{k_l}-i^{(l)}_{k_{l}-1}}\frac{(2(n_l-i^{(l)}_{k_l})-1)!!}{(2(n_l-i^{(l)}_{k_l}))!!}\right]
	 \\\notag
	 &	 \times\int_{\mathbb S^{d-1}} \int_{(\bu^\perp)^{d-1}} g(0,\bx_1,\dots,\bx_{d-1}) f_{\BS_{d}^0}(\bx_1,\dots,\bx_{d-1})|\conv(0,\bx_1,\ldots,\bx_{d-1})|\notag
	 \\\notag
	 &\hphantom{\times\int_{\mathbb S^{d-1}} \int_{(\bu^\perp)^{d-1}}}\times\;\lambda_{\bu^\perp}(\dd\bx_1)\ldots\lambda_{\bu^\perp}(\dd\bx_{d-1})\,\dd r \,\dd\mu(\dd\bu).
\end{align}
\noindent Now, fix some $\bu\in\mathbb S^{d-1}$. Let $e_1,\dots, e_{d-1}$ be an orthonormal basis in~$\bu^\perp$. Let $Q$ be an orthogonal matrix with the columns  $e_1,\dots,e_{d-1}, u$:
\begin{align*}
    Q:=[e_1,\dots,e_{d-1}, u].
\end{align*}
Since $ Q^\top \bu=(0,\dots,0,1)$, changing the coordinates
\begin{align*}
    \bx_1,\dots,\bx_d\mapsto Q\bx_1,\dots,Q\bx_d
\end{align*}
leads to
\begin{align}\label{1955}
	 &\int_{(\bu^\perp)^{d-1}}g(0,\bx_1,\dots,\bx_{d-1}) f_{\BS_{d}}(\bx_1,\dots,\bx_{d-1})
	 \\\notag
	 &\hphantom{\int_{(\bu^\perp)^{d-1}}}
	 \times |\conv(0,\bx_1,\ldots,\bx_{d-1})|\lambda_{\bu^\perp}(\dd\bx_1)\ldots\lambda_{\bu^\perp}(\dd\bx_d)
	 \\\notag
	 &=\int_{(\R^{d-1})^{d-1}}g(0,\bx'_1,\dots,\bx'_{d-1}) f_{\BS_{d}}(\bx'_1,\dots,\bx'_d)|
	 \\\notag
	 &\hphantom{=\int_{(\R^{d-1})^{d-1}}}
	 \times|\conv(0,\bx_1,\ldots,\bx_{d-1})| \dd\bx_1\dots\dd\bx_{d-1},
\end{align}
where we used the rotational invariance of  the volume, $g$ and $f_{\BS_{d}}$, and 
\begin{align*}
    \text{for}\quad \bx=(x_1,\dots,x_{d-1})\quad\text{we write}\quad\bx':=(x_1,\dots,x_{d-1},0).
\end{align*}
It is well known that  the orthogonal projection of the standard Gaussian distribution onto a linear subspace is again the standard Gaussian distribution of corresponding dimension. Thus it follows from~\eqref{1541} (applied to dimension $d-1$ as well) that the function
\begin{align*}
    (2\pi)^{\frac{d-1}{2}}\prod_{\substack{l\,:\,k_l\ne 0\\ l \ne 0}} \big[i_1^{(l)}(i_2^{(l)}-i_1^{(l)}) \dots (i_{k_l}^{(l)}-i_{k_{l-1}}^{(l)})\big]^{1/2}f_{\BS_{d}}(\by'_1,\dots,\by'_{d-1}).
\end{align*}
coincides with the probability density function of $P_d\BS_{d}^0$, where 
\begin{align*}
    (P_dS_i^{(1)})_{i=1}^{n_1},\dots, (P_dS_i^{(m)})_{i=1}^{n_m}
\end{align*}
is considered as a collection of $m$ random walks in $\R^{d-1}$.
Therefore,
\begin{align*}
	&\E \big[g(P_d\BS_{d}^0)|\conv P_d\BS_{d}^0|\big]=(2\pi)^{\frac{d-1}{2}}\prod_{\substack{l\,:\,k_l\ne 0\\ l \ne 0}} \big[i_1^{(l)}(i_2^{(l)}-i_1^{(l)}) \dots (i_{k_l}^{(l)}-i_{k_{l-1}}^{(l)})\big]^{1/2}
	\\
	&\times\int_{(\R^{d-1})^{d-1}}g(0,\bx'_1,\dots,\bx'_{d-1}) f_{\BS_{d}}(0,\bx'_1,\dots,\bx'_{d-1})|\conv(0,\bx_1,\ldots,\bx_{d-1})| \dd\bx_1\dots\dd\bx_{d-1},
\end{align*}
which together with~\eqref{1955} leads to
\begin{align}\label{2022}
	 &\int_{(\bu^\perp)^{d-1}}g(0,\bx_1,\dots,\bx_{d-1}) f_{\BS_{d}^0}(\bx_1,\dots,\bx_{d-1})
	 \\\notag
	 &\hphantom{\int_{(\bu^\perp)^{d-1}}}
	 \times |\conv(0,\bx_1,\ldots,\bx_{d-1})|\lambda_{\bu^\perp}(\dd\bx_1)\ldots\lambda_{\bu^\perp}(\dd\bx_{d-1})
	 \\\notag
	 &=(2\pi)^{-\frac{d-1}{2}}\prod_{\substack{l\,:\,k_l\ne 0\\ l\ne 0}} \big[i_1^{(l)}(i_2^{(l)}-i_1^{(l)}) \dots (i_{k_l}^{(l)}-i_{k_{l-1}}^{(l)})\big]^{-1/2}\;\E \big[g(P_d\BS_{d}^0)|\conv P_d\BS_{d}^0|\big].
\end{align}
Integrating over $\mathbb S^{d-1}$ and combining with~\eqref{1149} gives~\eqref{1152}.

\subsection{Proof of~\eqref{3140}} It follows from~\eqref{2236} and  the translation invariance of $g$ that
\begin{align}\label{1105}
	 &\E[g(\BS_{d})\ind_{\lbrace\conv\BS_{d}\,\in\,\cF(\cQ_d^0)\rbrace}]=2^d\pi^{\frac{d-1}{2}}\Gamma\left(\frac{d+1}{2}\right) 
	 \\\notag	&\times\int_{\mathbb S^{d-1}} \int_{0}^{\infty}\int_{(\bu^\perp)^{d}}\P[\conv\BS_{d}\in\cF(\cQ_d^0)\given\BS_{d}=(\bx_1+r\bu,\dots,\bx_d+r\bu)]\notag
	 \\&\hphantom{\times\int_{\mathbb S^{d-1}} \int_{0}^{\infty}\int_{(\bu^\perp)^{d}}}
	 \times  g(\bx_1,\dots,\bx_d)\; f_{\BS_{d}}(\bx_1+r\bu,\dots,\bx_d+r\bu)\notag
	 \\\notag
	 &\hphantom{\times\int_{\mathbb S^{d-1}} \int_{0}^{\infty}\int_{(\bu^\perp)^{d}}}
	 \times|\conv(\bx_1,\ldots,\bx_d)|\;\lambda_{\bu^\perp}(\dd\bx_1)\ldots\lambda_{\bu^\perp}(\dd\bx_d)\,\dd r \,\dd\mu(\dd\bu),
\end{align}
where $f_{\BS_{d}}$ is the joint density of $\BS_{d}$, that is,
\begin{align}\label{1540}
    &f_{\BS_{d}}(\bx_1,\dots,\bx_d)=(2\pi)^{-d^2/2}\prod_{l\,:\,k_l\ne0} \big[i_1^{(l)}(i_2^{(l)}-i_1^{(l)}) \dots (i_{k_l}^{(l)}-i_{k_{l}-1}^{(l)})\big]^{-d/2}\notag
    \\&\times\exp\left(-\frac12\sum_{l\,:\,k_l\ne0}\left[\frac{\Vert\bx_{1}^{(l)}\Vert^2}{i_1^{(l)}}+\frac{\Vert\bx_{2}^{(l)}-\bx_{1}^{(l)}\Vert^2}{i_2^{(l)}-i_1^{(l)}}+\dots+\frac{\Vert\bx_{k_l}^{(l)}-\bx_{k_{l}-1}^{(l)}\Vert^2}{i_{k_l}^{(l)}-i_{k_{l}-1}^{(l)}}\right]\right).
\end{align}
As above, 
\begin{align*}
    \bx_{i}^{(l)}:=\bx_{k_1+\dots+k_{l-1}+i}\quad \text{\ for\ } l>0 \text{\ such that\ } k_l \ne 0.\\
\end{align*}
Let us calculate the probability under the integral in the right-hand side of~\eqref{1105}.
Assuming that $\conv\BS_{d}$ has non-zero $(d-$1$)$-dimensional content (and thus, $\aff\BS_{d}=\bu^\perp+r\bu$), which holds with probability~$1$, we obtain that $\conv\BS_{d}\in\cF({\cQ_{d}^0})$ if and only if ${\cQ_{d}^0}$ lies in the closed half-space with boundary $\bu^\perp+r\bu$ containing the origin. Equivalently, the values of the projections of all random walks onto $\Span\bu$ do not exceed~$r$. Thus,
\begin{align}\label{24}
	&\P[\conv\BS_{d}\in\cF(\cQ_d^0)\given\BS_{d}=(\bx_1+r\bu,\dots,\bx_d+r\bu)]\notag
	\\&=\P[P_{u}S_{1}^{(0)},P_{u}S_{1}^{(1)},\dots,P_{u}S_{n_m}^{(m)}\leq r \given P_{u}S_{i^{(l)}_1}^{(l)}=0,\dots,P_{u}S_{i^{(l)}_{k_l}}^{(l)}=0, l:k_l>0]\notag
	\\&=\prod_{l=1}^{m}\P[P_{u}S_{1}^{(0)},P_{u}S_{1}^{(l)},\dots,P_{u}S_{n_l}^{(l)}\leq r \given P_{u}S_{i^{(l)}_1}^{(l)}=r,\dots, P_{u}S_{i_{k^{(l)}_l}}^{(l)}=r].
\end{align}
Here, we imply that if $k_l=0$, then the corresponding probability is unconditional, and therefore equals
\begin{align*}
	\P[P_{u}S_{1}^{(l)},\dots,P_{u}S_{n_l}^{(l)}\leq r]=p_{n_l}(r).
\end{align*}
Let us evaluate the conditional one. Since the increments of the random walks are independent, for  $l$ such that $k_l\ne0$,
\begin{align*}
	\P&\left[P_{u}S_{1}^{(l)},\dots,P_{u}S_{n_l}^{(l)}\leq r \given P_{u}S_{i^{(l)}_1}^{(l)}=r,\dots, P_{u}S_{i^{(l)}_{k_l}}^{(l)}=r\right]
	\\&=\P\left[P_{u}S_{1}^{(l)},\dots,P_{u}S_{i^{(l)}_1-1}^{(l)}\leq r \given P_{u}S_{i^{(l)}_1}^{(l)}=r\right]
	\\&\phantom{=}\times\prod_{j=1}^{k_l-1}\P\left[P_{u}S_{i^{(l)}_j+1}^{(l)},\dots,P_{u}S_{i^{(l)}_{j+1}-1}^{(l)}\leq r \given P_{u}S_{i^{(l)}_j}^{(l)}=P_{u}S_{i^{(l)}_{j+1}}^{(l)}=r\right]
	\\&\phantom{=}\times\P\left[P_{u}S_{i^{(l)}_{k_l}+1}^{(l)},\dots,P_{u}S_{n_l}^{(l)}\leq r \given P_{u} S_{i^{(l)}_{k_l}}^{(l)}=r\right].
\end{align*}
Let us consider each of the three parts in the right-hand side separately. First, by definition,
\begin{align*}
	\P&\left[P_{u}S_{1}^{(l)},\dots,P_{u}S_{i^{(l)}_1-1}^{(l)}\leq r \given P_{u}S_{i^{(l)}_1}^{(l)}=r\right]=q_{i_1}(r).
\end{align*}
Second, as in~\eqref{1114},
\begin{align*}
	\P&\bigg[P_{u}S_{i^{(l)}_j+1}^{(l)},\dots,P_{u}S_{i^{(l)}_{j+1}-1}^{(l)}\leq r \given P_{u}S_{i^{(l)}_j}^{(l)}=P_{u}S^{(l)}_{i^{(l)}_{j+1}}=r\bigg]
	\\
	&=\P\bigg[P_{u}S_{i^{(l)}_j+1}^{(l)},\dots,P_{u}S_{i^{(l)}_{j+1}-1}^{(l)}\leq 0 \given P_{u}S_{i^{(l)}_j}^{(l)}=P_{u}S^{(l)}_{i^{(l)}_{j+1}}=0\bigg]
	\\
	&=\frac{1}{i^{(l)}_{j+1}-i^{(l)}_j}.
\end{align*}
Finally, as in~\eqref{1120},
\begin{align*}
    \P&\bigg[P_{u}S_{i^{(l)}_{k_l}+1}^{(l)},\dots,P_{u}S_{n_l}^{(l)}\leq r \given P_{u} S_{i^{(l)}_{k_l}}^{(l)}=r\bigg]
    \\&=\P\bigg[P_{u}\bigg(S_{i^{(l)}_{k_l}+1}^{(l)}-S_{i^{(l)}_{k_l}}^{(l)}\bigg),\dots,P_{u}\bigg(S_{n_l}^{(l)}-S_{i^{(l)}_{k_l}}^{(l)}\bigg)\leq 0\bigg]\\& =\frac{\big(2\big(n_l-i^{(l)}_{k_l}\big)-1\big)!!}{\big(2\big(n_l-i^{(l)}_{k_l}\big)\big)!!}.
\end{align*}
\noindent Inserting all the above equalities into (\ref{1105}), through (\ref{24}), leads to
\begin{align*}
	 &\E[g(\BS_{d})\ind_{\lbrace\conv\BS_{d}\in\cF({\cQ_{d}^0})\rbrace}]=2^d\pi^{\frac{d-1}{2}}\Gamma\left(\frac{d+1}{2}\right) 
	 \\
	 &\times\prod_{\substack{l\,:\,k_l\ne0\\ l \ne 0}}\left[\frac{1}{i^{(l)}_2-i^{(l)}_1}\dots\frac{1}{i^{(l)}_{k_l}-i^{(l)}_{k_{l}-1}}\frac{(2(n_l-i^{(l)}_{k_l})-1)!!}{(2(n_l-i^{(l)}_{k_l}))!!}\right]\notag
	 \\&\times\int_{\mathbb S^{d-1}} \int_{0}^{\infty}\int_{(\bu^\perp)^{d}}\bigg[\prod_{\substack{l\,:\,k_l=0\\ l \ne 0}}p_{n_l}(r)\bigg] \bigg[\prod_{\substack{l\,:\,k_l\ne0\\ l \ne 0}}q_{i_1^{(l)}}(r)\bigg]\notag g(\bx_1,\dots,\bx_d)
	 \\\notag
	 &\hphantom{\times \int_{\mathbb S^{d-1}} \int_{0}^{\infty}\int_{(\bu^\perp)^{d}}}
	 \times f_{\BS_{d}}(\bx_1+r\bu,\dots,\bx_d+r\bu) \cdot|\conv(\bx_1,\ldots,\bx_d)|
	 \\\notag
	 &\hphantom{\times \int_{\mathbb S^{d-1}} \int_{0}^{\infty}\int_{(\bu^\perp)^{d}}}
	 \times \lambda_{\bu^\perp}(\dd\bx_1)\ldots\lambda_{\bu^\perp}(\dd\bx_k)\,\dd r \,\dd\mu(\dd\bu).\notag
\end{align*}
Since $r\bu$ is orthogonal to $\bx_1,\dots,\bx_d$, it follows  from (\ref{1540}) that 
\begin{align}\label{1142}
    f_{\BS_{d}}(\bx_1+r\bu,\dots,\bx_d+r\bu)=\exp\left(-\frac{r^2}2\sum_{\substack{l\,:\,k_l\ne 0}}\frac{1}{i_1^{(l)}}\right) f_{\BS_{d}}(\bx_1,\dots,\bx_d).
\end{align}
Therefore,  
\begin{align}\label{2111}
	 &\E[g(\BS_{d})\ind_{\lbrace\conv\BS_{d}\in\cF({\cQ_{d}^0})\rbrace}]
	 =2^d\pi^{\frac{d-1}{2}}\Gamma\left(\frac{d+1}{2}\right) 
	 \\\notag
	 &\times\prod_{\substack{l\,:\,k_l\ne 0\\ l\ne 0}}\left[\frac{1}{i^{(l)}_2-i^{(l)}_1}\dots\frac{1}{i^{(l)}_{k_l}-i^{(l)}_{k_{l}-1}}\frac{(2(n_l-i^{(l)}_{k_l})-1)!!}{(2(n_l-i^{(l)}_{k_l}))!!}\right]
	 \\\notag
	 &\times\int_0^\infty\bigg[\prod_{\substack{l\,:\,k_l= 0\\ l \ne 0}}p_{n_l}(r)\bigg] \bigg[\prod_{\substack{l\,:\,k_l\ne 0}}q_{i_1^{(l)}}(r)\bigg] \exp\left(-\frac{r^2}2\sum_{\substack{l\,:\,k_l\ne 0}}\frac{1}{i_1^{(l)}}\right) \dd r
	 \\\notag
	 &\times\int_{\mathbb S^{d-1}} \int_{(\bu^\perp)^{d}}g(\bx_1,\dots,\bx_d) f_{\BS_{d}}(\bx_1,\dots,\bx_d) \\\notag
	 &\phantom{\times\int_{\mathbb S^{d-1}} \int_{(\bu^\perp)^{d}}}\times |\conv(\bx_1,\ldots,\bx_d)|\;\lambda_{\bu^\perp}(\dd\bx_1)\ldots\lambda_{\bu^\perp}(\dd\bx_d)\,\dd\mu(\dd\bu).
\end{align}
\medskip
Similarly to~\eqref{2022} we obtain 
\begin{align}\label{1156}
	 &\int_{(\bu^\perp)^{d-1}}g(\bx_1,\dots,\bx_d) f_{\BS_{d}}(\bx_1,\dots,\bx_d)
	 \\\notag
	 &\hphantom{\int_{(\bu^\perp)^{d-1}}}
	 \times |\conv(0,\bx_1,\ldots,\bx_{d})|\lambda_{\bu^\perp}(\dd\bx_1)\ldots\lambda_{\bu^\perp}(\dd\bx_d)
	 \\\notag
	 &=(2\pi)^{-d/2}\prod_{\substack{l\,:\,k_l\ne 0\\ l\ne 0}} \big[i_1^{(l)}(i_2^{(l)}-i_1^{(l)}) \dots (i_{k_l}^{(l)}-i_{k_{l-1}}^{(l)})\big]^{-1/2}\;\E \big[g(P_d\BS_{d})|\conv P_d\BS_{d}|\big].
\end{align}
Integrating over $\mathbb S^{d-1}$ and combining with~\eqref{2111} gives~\eqref{3140}.

\subsection{Proof of~\eqref{2140}}
As above, it follows from~\eqref{2236} and  the translation invariance of $g$ that
\begin{align*}
	 &\E[g(\BS_{d})\ind_{\lbrace\conv\BS_{d}\,\in\,\cF(\cQ_d)\rbrace}]=2^d\pi^{\frac{d-1}{2}}\Gamma\left(\frac{d+1}{2}\right) 
	 \\\notag	&\times\int_{\mathbb S^{d-1}} \int_{0}^{\infty}\int_{(\bu^\perp)^{d}}\P[\conv\BS_{d}\in\cF(\cQ_d)\given\BS_{d}=(\bx_1+r\bu,\dots,\bx_d+r\bu)]\notag
	 \\&\hphantom{\times\int_{\mathbb S^{d-1}} \int_{0}^{\infty}\int_{(\bu^\perp)^{d}}}
	 \times  g(\bx_1,\dots,\bx_d)\; f_{\BS_{d}}(\bx_1+r\bu,\dots,\bx_d+r\bu)\notag
	 \\\notag
	 &\hphantom{\times\int_{\mathbb S^{d-1}} \int_{0}^{\infty}\int_{(\bu^\perp)^{d}}}
	 \times|\conv(\bx_1,\ldots,\bx_d)|\;\lambda_{\bu^\perp}(\dd\bx_1)\ldots\lambda_{\bu^\perp}(\dd\bx_d)\,\dd r \,\dd\mu(\dd\bu),
\end{align*}
where $f_{\BS_{d}}$ is the joint density of $\BS_{d}$ defined in~\eqref{1540}.

Let us calculate the probability under the integral.
Assuming that $\conv\BS_{d}$ has non-zero $(d-$1$)$-dimensional content (and thus, $\aff\BS_{d}=\bu^\perp+r\bu$), which holds with probability~$1$, we obtain that $\conv\BS_{d}\in\cF({\cQ_{d}})$ if and only if ${\cQ_{d}}$ lies in one of the two closed half-spaces whose boundary coincides with $\bu^\perp+r\bu$. Equivalently, the values of the projections of all random walks forming $\cQ_d$ onto $\Span\bu$ either do not exceed $r$ or larger than $r$ simultaneously. Thus,
\begin{align}\label{1956}
	\P&[\conv\BS_{d}\in\cF(\cQ_d)\given\BS_{d}=(\bx_1+r\bu,\dots,\bx_d+r\bu)]
	\\\notag&=\P[P_{u}S_{1}^{(1)},\dots,P_{u}S_{n_m}^{(m)}\leq r \given P_{u}S_{i^{(1)}_1}^{(1)}=r,\dots,P_{u}S_{i^{(m)}_{k_m}}^{(m)}=r]
	\\\notag&\quad+\P[P_{u}S_{1}^{(1)},\dots,P_{u}S_{n_m}^{(m)}\geq r \given P_{u}S_{i^{(1)}_1}^{(1)}=r,\dots,P_{u}S_{i^{(m)}_{k_m}}^{(m)}=r]
	\\\notag&=\prod_{l=1}^{m}\P[P_{u}S_{1}^{(l)},\dots,P_{u}S_{n_l}^{(l)}\leq r \given P_{u}S_{i^{(l)}_1}^{(l)}=r,\dots, P_{u}S_{i^{(l)}_{k_l}}^{(l)}=r]
	\\\notag&\quad+\prod_{l=1}^{m}\P[P_{u}S_{1}^{(l)},\dots,P_{u}S_{n_l}^{(l)}\geq r \given P_{u}S_{i^{(l)}_1}^{(l)}=r,\dots, P_{u}S_{i^{(l)}_{k_l}}^{(l)}=r].
\end{align}
It is enough to consider the first summand, for the second one, due to the symmetry, can be obtained from it by replacing $r$ by $-r$. As before, we imply here that if $k_l=0$, then the corresponding probabilities are unconditional, and thus equal by definition to
\begin{align*}
	\P[P_{u}S_{1}^{(l)},\dots,P_{u}S_{n_l}^{(l)}\leq r]=p_{n_l}(r).
\end{align*}
Let us evaluate the conditional probabilities. Exactly as before, since the increments of the random walks are independent, for any $l$ such that $k_l\ne0$,
\begin{align*}
	\P&\left[P_{u}S_{1}^{(l)},\dots,P_{u}S_{n_l}^{(l)}\leq r \given P_{u}S_{i^{(l)}_1}^{(l)}=r,\dots, P_{u}S_{i^{(l)}_{k_l}}^{(l)}=r\right]\notag
	\\&=\P\left[P_{u}S_{1}^{(l)},\dots,P_{u}S_{i^{(l)}_1-1}^{(l)}\leq r \given P_{u}S_{i^{(l)}_1}^{(l)}=r\right]\notag
	\\&\phantom{=}\times\prod_{j=1}^{k_l-1}\P\left[P_{u}S_{i^{(l)}_j+1}^{(l)},\dots,P_{u}S_{i^{(l)}_{j+1}-1}^{(l)}\leq r \given P_{u}S_{i^{(l)}_j}^{(l)}=P_{u}S_{i^{(l)}_{j+1}}=r\right]\notag
	\\&\phantom{=}\times\P\left[P_{u}S_{i^{(l)}_{k_l}+1}^{(l)},\dots,P_{u}S_{n_l}^{(l)}\leq r \given P_{u} S_{i^{(l)}_{k_l}}^{(l)}=r\right].
\end{align*}
Let us again consider each of the three parts in the right-hand side separately.\\
For the first parts, by definition,
\begin{align*}
	\P&\left[P_{u}S_{1}^{(l)},\dots,P_{u}S_{i^{(l)}_1-1}^{(l)}\leq r \given P_{u}S_{i^{(l)}_1}^{(l)}=r\right]=q_{i_1}(r).
\end{align*}

\noindent Then for the second part, as in~\eqref{1114},
\begin{align*}
	\P\left[P_{u}S_{i^{(l)}_j+1}^{(l)},\dots,P_{u}S_{i^{(l)}_{j+1}-1}^{(l)}\leq r \given P_{u}S_{i^{(l)}_j}^{(l)}=P_{u}S^{(l)}_{i^{(l)}_{j+1}}=r\right]=\frac{1}{i^{(l)}_{j+1}-i^{(l)}_j}.
\end{align*}

\noindent Finally, for the third  part, as in~\eqref{1120},
\begin{align*}
    &\P\bigg[P_{u}S_{i^{(l)}_{k_l}+1}^{(l)},\dots,P_{u}S_{n_l}^{(l)}\leq r \given P_{u} S_{i^{(l)}_{k_l}}^{(l)}=r\bigg]
    \\&=\P\bigg[P_{u}\big(S_{i^{(l)}_{k_l}+1}^{(l)}-S_{i^{(l)}_{k_l}}^{(l)}\big),\dots,P_{u}\big(S_{n_l}^{(l)}-S_{i^{(l)}_{k_l}}^{(l)}\big)\geq 0\bigg]=\frac{\big(2n_l-i^{(l)}_{k_l})-1\big)!!}{\big(2(n_l-i^{(l)}_{k_l})\big)!!}.
\end{align*}
\medskip

\noindent Combining all these equalities gives an expression for the first summand in the right-hand side of~\eqref{1956}, and then changing $r$ to $-r$ gives an expression for the second one. Summing up,
\begin{align*}
	 &\E[g(\BS_{d})\ind_{\lbrace\conv\BS_{d}\in\cF({\cQ_{d}})\rbrace}]
	 =2^d\pi^{\frac{d-1}{2}}\Gamma\left(\frac{d+1}{2}\right)
	 \\
	 &\times\prod_{\substack{l\,:\,k_l\ne0\\ l \ne 0}}\left[\frac{1}{i^{(l)}_2-i^{(l)}_1}\dots\frac{1}{i^{(l)}_{k_l}-i^{(l)}_{k_{l}-1}}\frac{(2(n_l-i^{(l)}_{k_l})-1)!!}{(2(n_l-i^{(l)}_{k_l}))!!}\right]\notag
	 \\&\times\int_{\mathbb S^{d-1}} \int_{0}^{\infty}\int_{(\bu^\perp)^{d}}\bigg\lbrace\bigg[\prod_{\substack{l\,:\,k_l=0\\ l \ne 0}}p_{n_l}(r)\bigg] \bigg[\prod_{\substack{l\,:\,k_l\ne0\\ l \ne 0}}q_{i_1^{(l)}}(r)\bigg]\notag
	 +\bigg[\prod_{\substack{l\,:\,k_l=0\\ l \ne 0}}p_{n_l}(-r)\bigg] \bigg[\prod_{\substack{l\,:\,k_l\ne0\\ l \ne 0}}q_{i_1^{(l)}}(-r)\bigg]\bigg\rbrace
	 \\\notag
	 &\hphantom{\times \int_{\mathbb S^{d-1}} \int_{0}^{\infty}\int_{(\bu^\perp)^{d}}}
	 \times g(\bx_1,\dots,\bx_d) \,|\conv(\bx_1,\ldots,\bx_d)|\,
	 f_{\BS_{d}}(\bx_1+r\bu,\dots,\bx_d+r\bu) 
	 \\
	 &\hphantom{\times \int_{\mathbb S^{d-1}} \int_{0}^{\infty}\int_{(\bu^\perp)^{d}}}
	 \times\lambda_{\bu^\perp}(\dd\bx_1)\ldots\lambda_{\bu^\perp}(\dd\bx_k)\,\dd r \,\dd\mu(\dd\bu).\notag
\end{align*}
Using~\eqref{1142}, Fubini's theorem, and the formula
\begin{align*}
    \int_0^\infty (h(r)+h(-r))\dd r=\int_{-\infty}^\infty h(r)\dd r,\quad\text{where $h$ is even},
\end{align*}
leads to
\begin{align*}
	 \E&[g(\BS_{d})\ind_{\lbrace\conv\BS_{d}\in\cF({\cQ_{d}})\rbrace}]
	 =2^d\pi^{\frac{d-1}{2}}\Gamma\left(\frac{d+1}{2}\right) 
	 \\\notag
	 &\times\prod_{\substack{l\,:\,k_l\ne 0\\ l\ne 0}}\left[\frac{1}{i^{(l)}_2-i^{(l)}_1}\dots\frac{1}{i^{(l)}_{k_l}-i^{(l)}_{k_{l}-1}}\frac{(2(n_l-i^{(l)}_{k_l})-1)!!}{(2(n_l-i^{(l)}_{k_l}))!!}\right]
	 \\\notag
	 &\times\int_{-\infty}^\infty\bigg[\prod_{\substack{l\,:\,k_l= 0\\ l \ne 0}}p_{n_l}(r)\bigg] \bigg[\prod_{\substack{l\,:\,k_l\ne 0}}q_{i_1^{(l)}}(r)\bigg] \exp\left(-\frac{r^2}2\sum_{\substack{l\,:\,k_l\ne 0}}\frac{1}{i_1^{(l)}}\right) \dd r
	 \\\notag
	 &\times\int_{\mathbb S^{d-1}} \int_{(\bu^\perp)^{d}}g(\bx_1,\dots,\bx_d) f_{\BS_{d}}(\bx_1,\dots,\bx_d) \\\notag
	 &\phantom{\times\int_{\mathbb S^{d-1}} \int_{(\bu^\perp)^{d}}}\times |\conv(\bx_1,\ldots,\bx_d)|\;\lambda_{\bu^\perp}(\dd\bx_1)\ldots\lambda_{\bu^\perp}(\dd\bx_d)\,\dd\mu(\dd\bu).
\end{align*}
Finally, applying~\eqref{1156} and integrating over $\mathbb S^{d-1}$ gives~\eqref{2140}.

\section{Volumes of weighted Gaussian simplices.}\label{1138}
\subsection{Formulation of result}
Let
\begin{align*}
    \chi_1,\chi_2,\dots,\chi_d\in\R^1
\end{align*}
be independent random variables and suppose that for any $k=1,\dots,d,$  the random variable  $\chi_k$ has the chi distribution  with $k$ degrees of freedom (that is, the distribution of the norm of the $k$-dimensional standard Gaussian vector).

The main result of this section is the following formula for the volume moments of the weighted Gaussian simplex.
\begin{theorem}\label{840}
Fix some $l={1,\dots,d}$ and let $X_0,X_1,\dots, X_l\in\R^d$ be $d$-dimensional independent standard Gaussian vectors. Then for any $\sigma_0,\dots,\sigma_d>0$ and for any $p>0$,
\begin{align}\label{936}
    \E|\conv(\sigma_0 X_0,\dots, \sigma_l X_l)|^p
   =\left[\frac{2^{l/2}\sigma_0\dots\sigma_l}{l!}\sqrt{\frac{1}{\sigma_0^2}+\dots+\frac{1}{\sigma_l^2}}\,\right]^p
    \prod_{i=d-l+1}^{d}\frac{\Gamma\big(\frac{i+p}{2}\big)}{\Gamma\big(\frac{i}{2}\big)}.
\end{align}
\end{theorem}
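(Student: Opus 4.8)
The plan is to reduce the weighted affine simplex to the parallelepiped spanned by genuinely \emph{independent} standard Gaussian vectors, for which a Gram--Schmidt factorization turns the volume into a product of independent chi variables. Write $Y_i:=\sigma_i X_i$, so the $Y_i$ are independent with $Y_i\sim N(0,\sigma_i^2 I_d)$, and recall the Gram formula
\[
|\conv(Y_0,\dots,Y_l)|=\frac{1}{l!}\sqrt{\det(E^\top E)},\qquad E:=[\,Y_1-Y_0,\ \dots,\ Y_l-Y_0\,]\in\R^{d\times l}.
\]

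First I would identify the Gaussian law of the edge matrix $E$. Its columns $Z_i:=Y_i-Y_0$ are jointly centred Gaussian with $\E[(Z_i)_a(Z_j)_b]=\Sigma_{ij}\,\delta_{ab}$, i.e.\ covariance $\Sigma\otimes I_d$, where $\Sigma:=\sigma_0^2\,\mathbf 1\mathbf 1^\top+\mathrm{diag}(\sigma_1^2,\dots,\sigma_l^2)$; the off-diagonal $\sigma_0^2$ is exactly the correlation induced by the shared base vertex $Y_0$. Hence $E\eqdistr G\,\Sigma^{1/2}$ for a $d\times l$ matrix $G$ with i.i.d.\ standard Gaussian entries, and therefore $\det(E^\top E)\eqdistr\det\Sigma\cdot\det(G^\top G)$. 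The matrix determinant lemma evaluates the deterministic factor as $\det\Sigma=\big(\prod_{i=1}^l\sigma_i^2\big)\big(1+\sigma_0^2\sum_{i=1}^l\sigma_i^{-2}\big)=\big(\prod_{i=0}^l\sigma_i^2\big)\big(\sum_{i=0}^l\sigma_i^{-2}\big)$, which is visibly symmetric in all the $\sigma_i$ (a reassuring consistency check, since the choice of base vertex was arbitrary) and already produces the prefactor $\sigma_0\cdots\sigma_l\,\sqrt{\sum_i\sigma_i^{-2}}$ of~\eqref{936}.

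It then remains to compute $\E[\det(G^\top G)^{p/2}]$, the moments of the $l$-volume of the parallelepiped spanned by $l$ independent standard Gaussian vectors $g_1,\dots,g_l$ in $\R^d$. Here the decisive point is that, unlike in the affine picture, no shift survives: Gram--Schmidt gives $\sqrt{\det(G^\top G)}=\prod_{k=1}^l\|P_{V_{k-1}^\perp}g_k\|$ with $V_{k-1}:=\Span(g_1,\dots,g_{k-1})$ and $V_0:=\{0\}$. Conditionally on $g_1,\dots,g_{k-1}$ the subspace $V_{k-1}^\perp$ is fixed of dimension $d-k+1$, and by rotational invariance $P_{V_{k-1}^\perp}g_k$ is a standard Gaussian on it, independent of the past; thus $\|P_{V_{k-1}^\perp}g_k\|\sim\chi_{d-k+1}$ with independent factors, so $\sqrt{\det(G^\top G)}\eqdistr\prod_{i=d-l+1}^d\chi_i$. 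Using $\E[\chi_i^p]=2^{p/2}\Gamma(\tfrac{i+p}{2})/\Gamma(\tfrac i2)$, the product contributes $2^{lp/2}\prod_{i=d-l+1}^d\Gamma(\tfrac{i+p}{2})/\Gamma(\tfrac i2)$. Combining this with the $p$-th power of $\tfrac1{l!}\sqrt{\det\Sigma}=\tfrac1{l!}\sigma_0\cdots\sigma_l\sqrt{\sum_i\sigma_i^{-2}}$ and absorbing the factor $2^{lp/2}$ into the bracket yields exactly~\eqref{936}.

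The two determinant evaluations are routine; the step carrying the real content is the Gram--Schmidt factorization into \emph{independent central} chi variables, whose validity rests on rotational invariance of the standard Gaussian together with the independence of the $g_k$. It is precisely this centred, linear (rather than affine) structure of $\det(G^\top G)$ that the change of variables $E\eqdistr G\Sigma^{1/2}$ is engineered to expose, thereby avoiding the non-central chi distributions that a naive base-times-height expansion of the affine simplex would produce. I expect the main care to lie in recording the independence of successive Gram--Schmidt residuals cleanly via conditioning, which is where I would invoke (or reprove) the classical distributional identity for the volume of an unweighted standard Gaussian simplex.
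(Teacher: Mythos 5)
Your proof is correct, and it takes a genuinely different route from the paper's. The paper first treats the case $l=d$ by embedding the problem in $\R^{d+1}$: it realizes the weighted simplex as the image $AT$ of the fixed simplex $T=\conv(\sigma_0e_0,\dots,\sigma_de_d)\subset\R^{d+1}$ under the Gaussian matrix $A=[X_0\,X_1\dots X_d]$ and invokes the stochastic Blaschke--Petkantschin-type identity of Paouris and Pivovarov \cite{PP13}, $\E|AT|^p=\E\det^{p/2}(AA^\top)\int_{\mathbb S^{d}}|P_{\bu^\perp}T|^p\mu(\mathrm d\bu)$, evaluating the three ingredients separately (Gaussian parallelepiped moments, the explicit formula \cite{HH08} for $|T|$, and a spherical integral via polar decomposition); the case $l<d$ then needs a second, separate argument (projection onto an $l$-dimensional coordinate subspace, independence of the spanned subspace and the volume, and solving for $\E|\det P_l^{\tilde W_l}|^{-p}$ by comparison with the unweighted moments from \cite{GKT17}). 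Your argument bypasses all of this: the observation that the edge matrix $E=[Y_1-Y_0,\dots,Y_l-Y_0]$ has i.i.d.\ Gaussian rows with covariance $\Sigma=\sigma_0^2\mathbf 1\mathbf 1^\top+\mathrm{diag}(\sigma_1^2,\dots,\sigma_l^2)$, hence $E\eqdistr G\Sigma^{1/2}$ with $G$ an i.i.d.\ standard Gaussian $d\times l$ matrix, splits $\det(E^\top E)$ into the deterministic factor $\det\Sigma=\big(\prod_{i=0}^l\sigma_i^2\big)\big(\sum_{i=0}^l\sigma_i^{-2}\big)$ (matrix determinant lemma) times $\det(G^\top G)$, and the chi-factorization of the latter is exactly the tool the paper itself develops in Lemma~\ref{2055} and Corollary~\ref{2240}. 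This treats all $l\leq d$ uniformly, is self-contained, and isolates cleanly where the weights enter (only through $\det\Sigma$); the covariance factorization is precisely what removes the correlation induced by the shared base vertex, which is the difficulty the paper's $\R^{d+1}$-embedding is designed to circumvent. What the paper's route buys in exchange is alignment with the Blaschke--Petkantschin machinery running through the whole paper and a template that applies to Gaussian images $AT$ of arbitrary fixed bodies, not only simplices; but as a proof of Theorem~\ref{840} itself, your argument is shorter and more elementary.
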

The proof is given in the next subsection.
\begin{remark}
 It is well-known that
\begin{align}\label{2056}
    \E\chi_k^p=2^{p/2}\frac{\Gamma((k+p)/2)}{\Gamma(k/2)}.
\end{align}    
Therefore one might suggest that by the method of moments it follows from~\eqref{936} that
\begin{align}\label{942}
    |\conv(\sigma_0 X_0,\dots, \sigma_l X_l)|\eqdistr \frac{2^{l/2}}{l!}\sigma_0\dots\sigma_l\left(\frac{1}{\sigma_0^2}+\dots+\frac{1}{\sigma_l^2}\right)^{1/2}\,\chi_{d-l+1}\dots\chi_d.
\end{align}
In our paper, we do not need this distributional identity, so we skip the detailed proof. Moreover, we believe that it is possible to derive~\eqref{942} directly, without  calculating the moments.\footnote{Indeed, after we prepared the first version of this paper, it has been done in~\cite{tM20}.}
\end{remark}

Although Theorem~\ref{840} might be of  independent interest, the main reason why we need it is the following application to the convex hull of several weighted random walks with a total number of steps equal to~$d+1$.
\begin{corollary}\label{937}
Let $X_0,X_1,\dots, X_d\in\R^d$ be $d$-dimensional independent standard Gaussian vectors. 
Fix some $l={1,\dots,d}$ and indices $0=i_0<i_1<\dots<i_l\leq d$ and consider the following $l+1$ weighted random walks  defined as
\begin{align*}
    Y_{i_0}:=\sigma_0X_0, Y_1:=Y_{i_0}+\sigma_1X_1,&\dots,Y_{i_1-1}:=Y_{i_1-2}+\sigma_{i_1-1}X_{i_1-1},
    \\Y_{i_1}:=\sigma_{i_1}X_{i_1}, Y_{i_1+1}:=Y_{i_1}+\sigma_{i_1+1}X_{i_1+1},&\dots,Y_{i_2-1}:=Y_{i_2-2}+\sigma_{i_2-1}X_{i_2-1},
    \\&\dots,
    \\Y_{i_l}:=\sigma_{i_l}X_{i_l}, Y_{i_l+1}:=Y_{i_l}+\sigma_{i_l+1}X_{i_l+1},&\dots,Y_{d}:=Y_{d-1}+\sigma_{i_d}X_{d}.
\end{align*}
Then the $p$th moment of the volume of a random simplex with vertices at $Y_0,\dots,Y_d$ is given by
\begin{align}\label{1645}
    \E |\conv(Y_0,\dots,Y_d)|^p=\left[\frac{2^{d/2}\sigma_0\dots\sigma_{d}}{d!}\sqrt{\frac{1}{\sigma_{i_0}^2}+\dots+\frac{1}{\sigma_{i_l}^2}}\,\,\right]^p
    \prod_{i=1}^{d}\frac{\Gamma\big(\frac{i+p}{2}\big)}{\Gamma\big(\frac{i}{2}\big)}.
\end{align}
\end{corollary}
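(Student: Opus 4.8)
The plan is to reduce the random-walk simplex to the independent-points configuration of Theorem~\ref{840} by a volume-preserving rewriting of the edge vectors, followed by a base-times-height factorization of a determinant.

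First I would use the standard fact that the volume of a $d$-simplex equals $\tfrac{1}{d!}$ times the absolute determinant of the edge vectors of \emph{any} spanning tree on its $d+1$ vertices (the edge-vector families of two spanning trees differ by a unimodular transformation, so the determinant is unchanged up to sign). I choose the spanning tree consisting of the within-walk consecutive edges together with the $l$ edges joining the starting points $Y_{i_0},\dots,Y_{i_l}$ in a star centred at $Y_{i_0}$. Since $Y_{i_j}=\sigma_{i_j}X_{i_j}$ while $Y_s-Y_{s-1}=\sigma_sX_s$ whenever $s\notin\{i_0,\dots,i_l\}$, this yields $d!\,|\conv(Y_0,\dots,Y_d)|=\bigl|\det[A\mid B]\bigr|$, where $A$ collects the $d-l$ \emph{internal} columns $\sigma_sX_s$ with $s\notin\{i_0,\dots,i_l\}$ and $B$ collects the $l$ columns $\sigma_{i_j}X_{i_j}-\sigma_{i_0}X_{i_0}$, $j=1,\dots,l$. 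The crucial gain is that $A$ and $B$ are built from disjoint, hence independent, families of Gaussian vectors.

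Next I would apply the base-times-height factorization of the $d$-dimensional parallelepiped volume: with $V:=\Span A$ (of dimension $d-l$ almost surely) and $W:=V^\perp$, writing $P_W$ for the orthogonal projection onto $W$,
\[
    \bigl|\det[A\mid B]\bigr|=\operatorname{vol}_{d-l}(A)\cdot\operatorname{vol}_{l}\bigl(P_W(\sigma_{i_1}X_{i_1}-\sigma_{i_0}X_{i_0}),\dots,P_W(\sigma_{i_l}X_{i_l}-\sigma_{i_0}X_{i_0})\bigr),
\]
the last factor equalling $l!\,|\conv(\sigma_{i_0}P_WX_{i_0},\dots,\sigma_{i_l}P_WX_{i_l})|$. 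Conditioning on $A$ (equivalently on $W$), which is independent of $X_{i_0},\dots,X_{i_l}$, the vectors $P_WX_{i_0},\dots,P_WX_{i_l}$ are i.i.d.\ standard Gaussian in $W\cong\R^l$ by rotational invariance. Theorem~\ref{840}, applied in ambient dimension $l$ to these $l+1$ points, then gives the conditional $p$-th moment of this root simplex \emph{free of $W$}; it carries precisely the factor $\bigl(\sum_{j=0}^l\sigma_{i_j}^{-2}\bigr)^{p/2}$ together with $2^{lp/2}$ and the Gamma product $\prod_{i=1}^{l}\Gamma\bigl(\tfrac{i+p}{2}\bigr)/\Gamma\bigl(\tfrac{i}{2}\bigr)$. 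Being constant in $A$, this conditional moment factors out, leaving only $\E\,\operatorname{vol}_{d-l}(A)^p$, the $p$-th moment of the volume spanned by $d-l$ independent weighted Gaussians in $\R^d$, which is the classical chi-product $\prod_{s\notin\{i_0,\dots,i_l\}}\sigma_s^{\,p}\cdot\prod_{i=l+1}^{d}2^{p/2}\Gamma\bigl(\tfrac{i+p}{2}\bigr)/\Gamma\bigl(\tfrac{i}{2}\bigr)$, obtained by the same base-height peeling (or directly from~\eqref{2056}).

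Finally I would multiply the two moments and collect terms: the weights recombine to $\sigma_0\cdots\sigma_d$, the powers of $2$ to $2^{dp/2}$, and the two Gamma products to $\prod_{i=1}^{d}\Gamma\bigl(\tfrac{i+p}{2}\bigr)/\Gamma\bigl(\tfrac{i}{2}\bigr)$; dividing by $(d!)^p$ then reproduces~\eqref{1645}. The main obstacle is the middle step: one must justify the base-times-height splitting together with the conditional rotational-invariance argument carefully --- in particular that, conditionally on the independent block $A$, the projected starting points are genuinely i.i.d.\ standard Gaussian in the random subspace $W$, so that all $W$-dependence drops out and Theorem~\ref{840} applies verbatim. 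The remaining constant bookkeeping is routine.
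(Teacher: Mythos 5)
Your proof is correct, and it shares its first and last steps with the paper's own argument: the spanning-tree rewriting of $d!\,|\conv(Y_0,\dots,Y_d)|$ as $|\det[A\mid B]|$ is exactly what the paper achieves by successive column subtractions, and the final bookkeeping agrees. The genuine difference lies in the middle step, i.e.\ in how a determinant with two independent blocks is evaluated. The paper views $\tfrac{1}{d!}|\det[A\mid B]|$ as $|\conv(0,A_1,\dots,A_{d-l},B_1,\dots,B_l)|$ and applies Corollary~\ref{35} (built on Lemma~\ref{2055}): the Gaussian columns of $A$ are peeled off one at a time, each height being measured against the span of all remaining columns \emph{including} $B$, which leaves the unprojected factor $\E|\conv(0,B_1,\dots,B_l)|^p=\E|\conv(\sigma_{0}X_0,\sigma_{i_1}X_{i_1},\dots,\sigma_{i_l}X_{i_l})|^p$ sitting in the ambient space $\R^d$; this is then evaluated via the non-full-dimensional case of Theorem~\ref{840}, whose own proof requires the inverse moment $\E|\det P_l^{\tilde W_l}|^{-p}$. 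You instead use the orthogonal splitting $|\det[A\mid B]|=\operatorname{vol}_{d-l}(A)\cdot\operatorname{vol}_l(P_W B)$ with $W=(\Span A)^\perp$ and condition on $A$: the projected root points become i.i.d.\ standard Gaussian in $W\cong\R^l$, so only the full-dimensional case~\eqref{1416} of Theorem~\ref{840} is needed, together with the elementary chi-product moments of $\operatorname{vol}_{d-l}(A)$. Both routes rest on the same two pillars (unimodular reduction to independent blocks, and Gaussian rotational invariance under conditioning), and both are rigorous; yours is marginally more economical, since it bypasses the $l<d$ case of Theorem~\ref{840} and hence the $\E|\det P_l^{\tilde W_l}|^{-p}$ computation, whereas the paper gets that case essentially for free, having proved Theorem~\ref{840} in full generality for other purposes. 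A small point in your favor: your accounting keeps all $l$ star edges $\sigma_{i_j}X_{i_j}-\sigma_0X_0$, $j=1,\dots,l$, which is the count consistent with the $l+1$ walks of the statement, while the paper's written proof drifts into an $i_2,\dots,i_l$ indexing with $(l-1)!$ factors that only comes out right after compensating slips.
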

The proof is given in Subsection~\ref{1619}.
Letting $\sigma_0\to 0$ in~\eqref{1645} produces the following formula.
\begin{corollary}\label{1733}
Under the assumptions of Corollary~\ref{937} with $\sigma_0=0$ we have
\begin{align*}
    \E |\conv(0,Y_1,\dots,Y_d)|^p=\left[\frac{2^{d/2}\sigma_1\dots\sigma_d}{d!}\right]^p
    \prod_{i=1}^{d}\frac{\Gamma\big(\frac{i+p}{2}\big)}{\Gamma\big(\frac{i}{2}\big)}.
\end{align*}
\end{corollary}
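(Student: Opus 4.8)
The plan is to obtain the identity by letting $\sigma_0\to0^+$ in formula~\eqref{1645} of Corollary~\ref{937}, as the remark preceding the statement suggests. Concretely, I would fix $\sigma_1,\dots,\sigma_d$ and the indices $0=i_0<i_1<\dots<i_l$, regard both sides of~\eqref{1645} as functions of $\sigma_0\in(0,\infty)$, show that each extends continuously to $\sigma_0=0$, and identify the two limits with the two sides of the claimed formula.

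The right-hand side is explicit, so I would handle it first. With $i_0=0$, pulling the factor $\sigma_0$ into the square root gives
\[
\frac{2^{d/2}\sigma_0\cdots\sigma_d}{d!}\sqrt{\frac{1}{\sigma_0^2}+\frac{1}{\sigma_{i_1}^2}+\dots+\frac{1}{\sigma_{i_l}^2}}=\frac{2^{d/2}\sigma_1\cdots\sigma_d}{d!}\sqrt{1+\sigma_0^2\Big(\tfrac{1}{\sigma_{i_1}^2}+\dots+\tfrac{1}{\sigma_{i_l}^2}\Big)}.
\]
As $\sigma_0\to0^+$ the square root tends to $1$, so the bracketed factor converges to $2^{d/2}\sigma_1\cdots\sigma_d/d!$, and hence the right-hand side of~\eqref{1645} converges to the right-hand side of the corollary.

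The substantive part is the left-hand side, where I must show
\[
\lim_{\sigma_0\to0^+}\E|\conv(Y_0,\dots,Y_d)|^p=\E|\conv(0,Y_1,\dots,Y_d)|^p.
\]
Pointwise this is clear: $Y_0=\sigma_0X_0\to0$; for $0<j<i_1$ one has $Y_j=\sigma_0X_0+\sigma_1X_1+\dots+\sigma_jX_j\to\sigma_1X_1+\dots+\sigma_jX_j$; and for $j\ge i_1$ the vertices $Y_j$ do not depend on $\sigma_0$ at all, since each walk restarts at $i_1,\dots,i_l$. Thus the whole vertex tuple converges surely to the tuple obtained by setting $\sigma_0=0$, and as the simplex volume is a continuous function of its vertices, $|\conv(Y_0,\dots,Y_d)|^p\to|\conv(0,Y_1,\dots,Y_d)|^p$ surely.

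To move the limit inside the expectation I would invoke dominated convergence. Writing the volume as $\tfrac1{d!}|\det(Y_1-Y_0,\dots,Y_d-Y_0)|$ and expanding multilinearly, the determinant is a polynomial in the coordinates of $X_0,\dots,X_d$; moreover $\sigma_0$ enters only through the term $-\sigma_0X_0$ appearing in the columns $Y_j-Y_0$ with $j\ge i_1$, so by the alternating property the determinant is in fact \emph{affine} in $\sigma_0$, say $\det=A+\sigma_0 B$ with $A,B$ independent of $\sigma_0$. Hence for $\sigma_0\in(0,1]$ one has $|\conv(Y_0,\dots,Y_d)|^p\le (d!)^{-p}(|A|+|B|)^p$, a fixed polynomial in $\|X_0\|,\dots,\|X_d\|$, which is integrable because standard Gaussian vectors have finite moments of all orders. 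Dominated convergence then gives the convergence of the expectations, and equating the two limits yields the formula. The only mildly delicate step is the construction of this uniform dominating bound; the remaining ingredients are the elementary limit on the right and the continuity of volume used on the left.
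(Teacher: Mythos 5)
Your proposal is correct and follows exactly the paper's route: the paper obtains Corollary~\ref{1733} by letting $\sigma_0\to 0$ in~\eqref{1645}, stating this in a single sentence without further justification. Your dominated-convergence argument (in particular the observation that the determinant is affine in $\sigma_0$ by the alternating property, giving a uniform integrable bound for $\sigma_0\in(0,1]$) rigorously supplies the interchange of limit and expectation that the paper leaves implicit.
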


\subsection{Proof of  Theorem~\ref{840}} 
Let us consider the case $l=d$ first, so the task is to show that
\begin{align}\label{1416}
    \E|\conv(\sigma_0 X_0,\dots, \sigma_d X_d)|^p
    =
    \left[\frac{2^{d/2}\sigma_0\dots\sigma_d}{d!}\sqrt{\frac{1}{\sigma_0^2}+\dots+\frac{1}{\sigma_d^2}}\,\right]^p
    \prod_{i=1}^{d}\frac{\Gamma\big(\frac{i+p}{2}\big)}{\Gamma\big(\frac{i}{2}\big)}.
\end{align}
Let $e_0,e_1,\dots,e_d$ be the standard orthonormal basis in $\R^{d+1}$. Consider a $d$-dimensional simplex $T\subset\R^{d+1}$ defined as
\begin{align*}
    T:=\conv(\sigma_0 e_0,\sigma_1 e_1,\dots,\sigma_d e_d).
\end{align*}

Denote by $A\in\R^{d\times(d+1)}$ a Gaussian matrix whose columns are $X_0,X_1,\dots, X_d$:
\begin{align*}
    A:=[X_0 X_1\dots X_d].
\end{align*}
It follows from~\cite[Proposition~4.1]{PP13} that
\begin{align}\label{2001}
    \E |AT|^p=\E{\det}^{p/2}(AA^\top)\int_{\mathbb S^{d}}|P_{\bu^\perp}T|^p\mu(\mathrm d\bu).
\end{align}
Note that $AT=\conv(\sigma_0X_0,\sigma_1 X_1,\dots, \sigma_d X_d)$ so the left-hand side of~\eqref{2001} coincides with the left-hand side of~\eqref{1416}. On the other hand, ${\det}^{p/2}(AA^\top)$ coincides with the volume of the parallelepiped spanned by the independent standard Gaussian vectors $Y_1,\dots,Y_d\in\R^{d+1}$ (corresponding to the rows of $A$). The moments of its volume are known (see, e.g.,~\cite[Theorem~2.2]{GKT17} or Corollary~\ref{2240} below):
\begin{align}\label{2138}
    \E{\det}^{p/2}(AA^\top)=2^{dp/2}\prod_{i=2}^{d+1}\frac{\Gamma\big(\frac{i+p}{2}\big)}{\Gamma\big(\frac{i}{2}\big)}.
\end{align}
There remains to calculate the integral in the right-hand side of~\eqref{2001}. Since $T$ is contained in an affine hyperplane orthogonal to
\begin{align*}
    \mathbf w:=\left(\frac{1}{\sqrt{d+1}},\dots,\frac{1}{\sqrt{d+1}}\right),
\end{align*}
we have
\begin{align}\label{2032}
    \int_{\mathbb S^{d}}|P_{\bu^\perp}T|^p\mu(\mathrm d\bu)=|T|^p \int_{\mathbb S^{d}}|\langle \bu, \mathbf w \rangle|^p\mu(\mathrm d\bu).
\end{align}
It is known~\cite[page~737]{HH08} that
\begin{align}\label{2145}
   |T|=\frac{\sigma_0\dots\sigma_d}{d!}\sqrt{\frac{1}{\sigma_0^2}+\dots+\frac{1}{\sigma_d^2}}.
\end{align}
To calculate the integral in the right-hand side of~\eqref{2032}, consider the standard Gaussian vector $Y\in\R^{d+1}$. It is well-known that it can be decomposed as
\begin{align*}
   Y=\frac{Y}{|Y|}\cdot |Y|\eqdistr U \cdot \chi_{d+1},
\end{align*}
where $U$ is uniformly distributed over $\mathbb S^{d}$ independently of $\chi_{d+1}$. Therefore,
\begin{align*}
   \E|\langle Y, \mathbf w \rangle|^p=\E \chi_{d+1}^p \int_{\mathbb S^{d}}|\langle \bu, \mathbf w \rangle|^p\mu(\mathrm d\bu).
\end{align*}
On the other hand, we know the moments of the standard Gaussian variable  $\langle Y, \mathbf w \rangle$:
\begin{align*}
   \E|\langle Y, \mathbf w \rangle|^p=\frac{2^{p/2}}{\sqrt\pi}\Gamma\left(\frac{p+1}{2}\right).
\end{align*}
Thus the latter two relations together  with~\eqref{2056} give
\begin{align*}
   \int_{\mathbb S^{d}}|\langle \bu, \mathbf w \rangle|^p\,\mu(\mathrm d d\bu)=\frac{\Gamma\left(\frac{d+1}{2}\right)\Gamma\left(\frac{p+1}{2}\right)}{\sqrt\pi\Gamma\left(\frac{d+1+p}{2}\right)},
\end{align*}
which together with~\eqref{2032} and~\eqref{2145} leads to
\begin{align*}
   \int_{\mathbb S^{d}}|P_{\bu^\perp}T|^p\mu(\mathrm d \bu)=\frac{\Gamma\left(\frac{d+1}{2}\right)\Gamma\left(\frac{p+1}{2}\right)}{\sqrt\pi (d!)^p\Gamma\left(\frac{d+1+p}{2}\right)}\sigma_0^p\dots\sigma_d^p\left(\frac{1}{\sigma_0^2}+\dots+\frac{1}{\sigma_d^2}\right)^{p/2}.
\end{align*}
Now combining this with~\eqref{2001} and~\eqref{2138} and using $\Gamma(1/2)=\sqrt\pi$ concludes the proof of~\eqref{1416}.

Now suppose that $l<d$. With probability one, 
\begin{align*}
   W_l:=\aff (\sigma_0 X_0,\dots, \sigma_l X_l)
\end{align*}   
is an affine  $l$-plane (which we will always assume). Denote by $O_{W_l}$ the orthogonal projection of the origin onto $W_l$. The rotational symmetry property of the standard Gaussian distribution implies that the \emph{linear} $l$-plane
\begin{align*}
    \tilde W_l:=W_L-O_{W_l}
\end{align*}
 is uniformly distributed over the $l$-dimensional Grassmannian with respect to the Haar measure independently of $|\conv(\sigma_0 X_0,\dots, \sigma_l X_l)|$. 
Let $P_l:\R^d\to\R^l$ denote the linear operator  of the orthogonal projection from $\R^d$ onto the first $l$ coordinates:
\begin{align*}
   P_l\,:\,\bx=(x_1,\dots,x_{d-1},x_d)\mapsto (x_1,\dots,x_{l}),
\end{align*}
and denote by $P_l^{\tilde W_l}:\tilde W_l\to\R^l$  its restriction to $\tilde W_l$.

We have
\begin{align*}
   |\conv(\sigma_1 P_lX_1,&\dots, \sigma_l P_lX_l)|
   \\
   &=|\conv(P_l^{\tilde W_l}(\sigma_0 X_1-O_{W_l}),\dots,  P_l^{\tilde W_l}(\sigma_lX_l-O_{W_l}))|
   \\
   &=|\conv(\sigma_0 X_1-O_{W_l},\dots,\sigma_lX_l-O_{W_l})|\cdot|\det P_l^{\tilde W_l}|
   \\
   &=|\conv(\sigma_0 X_1,\dots,\sigma_lX_l)|\cdot|\det P_l^{\tilde W_l}|.
\end{align*}   
With probability one, $\det P_l^{\tilde W_l}\ne 0$. Therefore,
\begin{align}\label{1145}
   \E|\conv&(\sigma_1 X_1,\dots, \sigma_l X_l)|^p=\E\left[\left(\frac{|\conv(\sigma_1 P_lX_1,\dots, \sigma_l P_lX_l)|}{|\det P_l^{W_l}|}\right)^p\,\right]
   \\\notag&=\E|\conv(\sigma_1 P_lX_1,\dots, \sigma_l P_lX_l)|^p\cdot\E|\det P_l^{W_l}|^{-p}
   \\\notag&= \left[\frac{2^{l/2}\sigma_0\dots\sigma_l}{l!}\sqrt{\frac{1}{\sigma_1^2}+\dots+\frac{1}{\sigma_l^2}}\,\right]^p
    \prod_{i=1}^{l}\frac{\Gamma\big(\frac{i+p}{2}\big)}{\Gamma\big(\frac{i}{2}\big)}\cdot\E|\det P_l^{\tilde W_l}|^{-p},
\end{align} 
where in the second step we used the independence of $\tilde W_l$ and \linebreak $|\conv(\sigma_1 X_1,\dots, \sigma_l X_l)|$ and in the last step we applied~\eqref{1416} with $d$ replaced by $l$. The latter was possible because, as was mentioned above, the orthogonal projection of the standard Gaussian distribution onto a linear subspace is again the standard Gaussian distribution of corresponding dimension.

There remains to compute $\E|\det P_l^{\tilde W_l}|^{-p}$. It is known (see, e.g.,~\cite[Theorem~2.3]{GKT17}) that
\begin{align*}
    \E&|\conv(X_0,\dots, X_l)|^p=\frac{(l+1)^{p/2}2^{pl/2}}{l!^p}\prod_{i=d-l+1}^{d}\frac{\Gamma\big(\frac{i+p}{2}\big)}{\Gamma\big(\frac{i}{2}\big)}.
\end{align*}
Comparing this and~\eqref{1145} with $\sigma_0=\dots=\sigma_l=1$ we obtain
\begin{align*}
    \E|\det P_l^{\tilde W_l}|^{-p}=\left.\prod_{i=d-l+1}^{d}\frac{\Gamma\big(\frac{i+p}{2}\big)}{\Gamma\big(\frac{i}{2}\big)}\middle/ \prod_{i=1}^{l}\frac{\Gamma\big(\frac{i+p}{2}\big)}{\Gamma\big(\frac{i}{2}\big)}\right.,
\end{align*}
and inserting it into~\eqref{1145} finishes the proof.

\subsection{Proof of Corollary~\ref{937}}\label{1619}
The following lemma must be known, however, we could not find a reference with the exact formulation, so for the reader's convenience we present a detailed proof.
\begin{lemma}\label{2055}
Fix some integers $k,l\geq0$ such that $k+l\leq d$ and let  $X_1,\dots, X_k\in\R^d$ be independent standard Gaussian vectors, while $Y_1,\dots,Y_l\in\R^d$ be some arbitrary random vectors independent of $X_1,\dots, X_k$. Then,
\begin{align*}
    |\conv(0,X_1,\dots, X_k,&Y_1,\dots,Y_l)|
    \\
    &\eqdistr\frac{l!}{(k+l)!} |\conv(0,Y_1,\dots,Y_l)|
    \prod_{i=d-l-k+1}^{d-l}\chi_{i},
\end{align*}
where the chi-distributed random variables are  independent of $(Y_1,\dots,Y_l)$.
\end{lemma}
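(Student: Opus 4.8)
The plan is to reduce the statement to two standard facts about Gaussian parallelepipeds: a base--height factorisation of the volume and the chi-product law of a Gaussian Gram determinant. Write $V_m(v_1,\dots,v_m)$ for the $m$-dimensional volume of the parallelepiped spanned by $v_1,\dots,v_m$, so that $|\conv(0,v_1,\dots,v_m)|=\tfrac1{m!}V_m(v_1,\dots,v_m)$. Taking $m=k+l$ and $m=l$, the asserted identity is equivalent to
\[
V_{k+l}(X_1,\dots,X_k,Y_1,\dots,Y_l)\eqdistr V_l(Y_1,\dots,Y_l)\prod_{i=d-l-k+1}^{d-l}\chi_i,
\]
with the chi factors independent of $(Y_1,\dots,Y_l)$; the prefactor $l!/(k+l)!$ then appears upon dividing by $(k+l)!$ and multiplying the right-hand side by $l!$.

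Then I would condition on $Y_1,\dots,Y_l$ and peel off the base. On the event that the $Y_j$ are linearly dependent both sides vanish: the right-hand side because $V_l(Y_1,\dots,Y_l)=0$, and the left-hand side because the vectors $X_1,\dots,X_k,Y_1,\dots,Y_l$ then span strictly fewer than $k+l$ dimensions. Hence I may assume $W:=\Span(Y_1,\dots,Y_l)$ is $l$-dimensional. Writing the volume as the square root of the Gram determinant and computing the Schur complement of the $Y$-block gives
\[
V_{k+l}(X_1,\dots,X_k,Y_1,\dots,Y_l)=V_l(Y_1,\dots,Y_l)\,V_k\!\big(P_{W^\perp}X_1,\dots,P_{W^\perp}X_k\big),
\]
because the Schur complement equals $(P_{W^\perp}X)^\top(P_{W^\perp}X)$, the Gram matrix of the projections of the $X_i$ onto $W^\perp$.

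The crucial point is the conditional law of the second factor. By rotation invariance of the standard Gaussian law on $\R^d$, conditionally on $(Y_1,\dots,Y_l)$ the vectors $P_{W^\perp}X_1,\dots,P_{W^\perp}X_k$ are independent standard Gaussian vectors in the $(d-l)$-dimensional space $W^\perp$; since their joint law depends on $W$ only through $\dim W^\perp=d-l$, it is the same for every realisation of the $Y_j$ and hence independent of $(Y_1,\dots,Y_l)$. It then remains to identify the law of the $V_k$ of $k$ independent standard Gaussians in $\R^{d-l}$. Via Gram--Schmidt (the QR decomposition) this volume is the product of the orthogonalised lengths $r_{11}\cdots r_{kk}$, and the Bartlett decomposition makes these independent with $r_{ii}\eqdistr\chi_{d-l-i+1}$; thus $V_k\eqdistr\prod_{i=d-l-k+1}^{d-l}\chi_i$, consistent with the Gaussian Gram-determinant moments used in~\eqref{2138}. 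Substituting back gives the displayed identity.

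I expect the main obstacle to be the bookkeeping in the conditioning step, namely arguing cleanly that the chi-product factors out as genuinely \emph{independent} of $(Y_1,\dots,Y_l)$ rather than merely being its conditional law: one must use that the conditional distribution of $(P_{W^\perp}X_i)_i$ does not depend on the particular subspace $W$, so that integrating over the $Y_j$ leaves the chi-product untouched. This is exactly where rotation invariance of the Gaussian does the work, the same mechanism invoked elsewhere in the paper. The two remaining ingredients---the Schur-complement volume factorisation and the chi-product law of a Gaussian parallelepiped---are classical, the latter being the unit-weight, origin-anchored specialisation of the moment computations carried out above.
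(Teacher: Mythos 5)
Your proof is correct, and it takes a genuinely different decomposition from the paper's, although both run on the same probabilistic engine. The paper never forms a Gram matrix: it iterates the base-times-height formula for simplex volumes, peeling off one Gaussian vector at a time. Writing $W_{k-i+l}=\Span(X_{i+1},\dots,X_k,Y_1,\dots,Y_l)$, it obtains the deterministic identity $|\conv(0,X_1,\dots,X_k,Y_1,\dots,Y_l)|=\frac{l!}{(k+l)!}\prod_{i=1}^k\dist(X_i,W_{k-i+l})\cdot|\conv(0,Y_1,\dots,Y_l)|$, and then argues for each $i$ that, by independence and spherical invariance, $\dist(X_i,W_{k-i+l})\eqdistr|P_{E^\perp}X_i|$ for any \emph{fixed} $(k-i+l)$-dimensional subspace $E$, hence is $\chi_{d-k-l+i}$-distributed. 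You instead split off the entire Gaussian block in one stroke via the Schur complement of the Gram determinant, $V_{k+l}(X,Y)=V_l(Y)\,V_k(P_{W^\perp}X_1,\dots,P_{W^\perp}X_k)$ with $W=\Span(Y_1,\dots,Y_l)$, and identify the law of the second factor by citing the Bartlett/QR decomposition for i.i.d.\ standard Gaussians in a $(d-l)$-dimensional space. The mechanism underneath is identical in the two proofs---rotation invariance makes the relevant conditional law depend only on the dimension of the subspace being projected onto, which is precisely what yields independence from $(Y_1,\dots,Y_l)$---and the Bartlett decomposition you invoke is itself proved by exactly the paper's one-vector-at-a-time peeling, so in effect you have modularized that part into a classical reference. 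What each route buys: the paper's argument is self-contained, with a single inductive mechanism and no external input; yours separates the deterministic geometry (the Schur-complement factorization) from the probabilistic input more cleanly, makes the independence-from-$(Y_1,\dots,Y_l)$ claim fully explicit through the conditional-law argument (a point the paper compresses into ``Now fix some $i$\dots and the lemma follows''), and explicitly treats the degenerate event where the $Y_j$ are linearly dependent, which the paper passes over silently even though its distance-distribution claims implicitly assume the spans have full dimension.
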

\begin{proof}
The idea of the proof is well-known, see, e.g.,~\cite[Chapter~7]{tA03} or~\cite{NV14}. For $i=1,\dots,k$, denote by $W_{k-i+l}$ the linear span of $X_{i+1},\dots, X_k$ and $Y_1,\dots,Y_l$:
\begin{align*}
    W_{k-i+l}:=\Span(X_{i+1},\dots, X_k,Y_1,\dots,Y_l),
\end{align*}
and denote by $\dist(X_i,W_{k-i+l})$ the distance between $X_i$ and $W_{k-i+l}$.
By the ``base times height'' formula applied several times,
\begin{align*}
     |&\conv(0,X_1,\dots, X_k,Y_1,\dots,Y_l)|
     \\&=\frac{\dist(X_1,W_{k-1+l})}{k+l}\cdot|\conv(0,X_2,\dots, X_k,Y_1,\dots,Y_l)
     \\&=\frac{\dist(X_1,W_{k-1+l})}{k+l}\cdot\frac{\dist(X_2,W_{k-2+l})}{k-1+l}\cdot|\conv(0,X_3,\dots, X_k,Y_1,\dots,Y_l)|
     \\&=...
     \\&=\prod_{i=1}^k\frac{\dist(X_1,W_{k-i+l})}{k-i+1+l}\cdot|\conv(0,Y_1,\dots,Y_l)|
     \\&=\frac{l!}{(k+l)!}\prod_{i=1}^k\dist(X_i,W_{k-i+l})\cdot|\conv(0,Y_1,\dots,Y_l)|.
\end{align*}
Now fix some $i\in\{1,\dots,k\}$. Since the distribution of  $X_i$ is independent of $(X_{i+1},\dots, X_k,Y_1,\dots,Y_l)$ and spherically invariant, we have
\begin{align*}
    \dist(X_i,W_{k-i+l})\eqdistr\dist(X_i,E)=|P_{E^\perp} X_i|
\end{align*}
for any \emph{fixed} $(k-i+l)$-dimensional subspace $E$. Since $X_i\in\R^d$ is the standard Gaussian vector, $P_{E^\perp} X_i$ has the standard Gaussian distribution in $E\cong\R^{d-k+i-l}$, which means that $|P_{E^\perp} X_i|$ is chi-distributed with $d-k+i-l$ degrees of freedom, and the lemma follows.
\end{proof}
Integrating and applying~\eqref{2056} readily gives the following relation.
\begin{corollary}\label{35}
Under assumptions of Lemma~\ref{2055}, for any $p>0$ we have
\begin{align*}
    \E|&\conv(0,X_1,\dots, X_k,Y_1,\dots,Y_l)|^p
    \\&=\left[\frac{2^{k/2}l!}{(k+l)!}\right]^p\prod_{i=d-l-k+1}^{d-l}\frac{\Gamma\big(\frac{i+p}{2}\big)}{\Gamma\big(\frac{i}{2}\big)}\E|\conv(0,Y_1,\dots,Y_l)|^p.
\end{align*}
\end{corollary}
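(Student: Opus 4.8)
The plan is to read the corollary off directly from the distributional identity established in Lemma~\ref{2055} by passing to $p$-th moments. Since both sides of that identity are non-negative random variables that agree in distribution, raising them to the power $p>0$ preserves the identity, and taking expectations gives
\begin{align*}
    \E|\conv(0,X_1,\dots,X_k,Y_1,\dots,Y_l)|^p
    =\left[\frac{l!}{(k+l)!}\right]^p\E\Big[|\conv(0,Y_1,\dots,Y_l)|^p\prod_{i=d-l-k+1}^{d-l}\chi_i^p\Big],
\end{align*}
where the deterministic constant $l!/(k+l)!$ has been pulled outside the $p$-th power.

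Next I would invoke the independence built into Lemma~\ref{2055}. The chi-distributed factors are independent of $(Y_1,\dots,Y_l)$, and, as they arise from the successive distances $\dist(X_i,W_{k-i+l})$ in the base-times-height argument underlying that lemma, they are mutually independent as well. Consequently the expectation on the right factors completely as
\begin{align*}
    \E|\conv(0,Y_1,\dots,Y_l)|^p\prod_{i=d-l-k+1}^{d-l}\E\chi_i^p,
\end{align*}
and into each factor I would substitute the standard chi-moment formula~\eqref{2056}, namely $\E\chi_i^p=2^{p/2}\Gamma\big(\tfrac{i+p}{2}\big)/\Gamma\big(\tfrac{i}{2}\big)$.

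All that remains is bookkeeping of constants. The index $i$ ranges over exactly $k$ values (from $d-l-k+1$ up to $d-l$), so the $k$ copies of $2^{p/2}$ combine into $2^{kp/2}=(2^{k/2})^p$; together with the prefactor this produces $\big[2^{k/2}l!/(k+l)!\big]^p$, while the surviving Gamma-quotients are precisely $\prod_{i=d-l-k+1}^{d-l}\Gamma\big(\tfrac{i+p}{2}\big)/\Gamma\big(\tfrac{i}{2}\big)$, matching the claimed expression. I do not expect any genuine obstacle: the only two points deserving a line of justification are that the relevant $p$-th moments are finite (so that equality in distribution transfers to equality of moments, the identity otherwise holding in $[0,\infty]$) and that the chi factors are \emph{mutually} independent, which is exactly the output of the construction in Lemma~\ref{2055}.
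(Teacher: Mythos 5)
Your proof is correct and follows essentially the same route as the paper, which simply states that the corollary follows from Lemma~\ref{2055} by ``integrating and applying~\eqref{2056}''; you have just spelled out the one-line argument (raise the distributional identity to the $p$-th power, factor by independence, insert the chi moments). Note that the mutual independence of the chi factors need not be re-derived from the base-times-height construction: it is already part of the standing convention at the start of Section~\ref{1138}, where $\chi_1,\dots,\chi_d$ are declared independent.
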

Taking $l=0$  gives the expression for the moments of the volume of the Gaussian simplex.
\begin{corollary}\label{2240}
Under assumptions of Lemma~\ref{2055}, for any $p>0$ we have
\begin{align*}
    \E|\conv(0,X_1,\dots,& X_k)|^p=\left[\frac{2^{k/2}}{k!}\right]^p\prod_{i=d-k+1}^{d}\frac{\Gamma\big(\frac{i+p}{2}\big)}{\Gamma\big(\frac{i}{2}\big)}.
\end{align*}
\end{corollary}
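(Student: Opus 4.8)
The plan is to obtain this statement as the $l=0$ specialization of Corollary~\ref{35}, so that essentially no new work is required beyond checking that the degenerate case is handled correctly. First I would set $l=0$ in the identity of Corollary~\ref{35}. On the right-hand side the prefactor $\left[2^{k/2}l!/(k+l)!\right]^p$ collapses to $\left[2^{k/2}/k!\right]^p$ because $0!=1$, and the product $\prod_{i=d-l-k+1}^{d-l}$ becomes $\prod_{i=d-k+1}^{d}$. The only remaining factor is $\E|\conv(0,Y_1,\dots,Y_l)|^p$ evaluated at $l=0$, namely $\E|\conv(0)|^p$.

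The single point requiring care is the interpretation of this last factor. When $l=0$ there are no vectors $Y_j$, so $\conv(0,Y_1,\dots,Y_l)$ degenerates to the singleton $\{0\}$; under the convention that the $0$-dimensional volume of a point equals $1$, we have $|\conv(0)|=1$ and hence $\E|\conv(0)|^p=1$. Substituting these simplifications yields exactly the claimed formula. Alternatively, and perhaps more transparently, I would apply Lemma~\ref{2055} directly with $l=0$, which gives the distributional identity
\begin{align*}
    |\conv(0,X_1,\dots,X_k)|\eqdistr\frac{1}{k!}\prod_{i=d-k+1}^{d}\chi_i,
\end{align*}
with the $\chi_i$ independent. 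Raising to the $p$th power, taking expectations, using independence, and invoking the chi-moment formula~\eqref{2056} then gives
\begin{align*}
    \E|\conv(0,X_1,\dots,X_k)|^p=\frac{1}{(k!)^p}\prod_{i=d-k+1}^{d}2^{p/2}\frac{\Gamma\big(\frac{i+p}{2}\big)}{\Gamma\big(\frac{i}{2}\big)},
\end{align*}
and since the product has exactly $k$ factors, the $k$ copies of $2^{p/2}$ combine into $2^{kp/2}=(2^{k/2})^p$, producing the stated expression.

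Because both routes reduce to routine substitution, there is no genuine obstacle here: the entire content is the bookkeeping of the degenerate $l=0$ case, which amounts to recognizing that a single point carries $0$-dimensional volume $1$ and that $0!=1$. I would therefore write the proof in one line, simply citing Corollary~\ref{35} with $l=0$ (or, equivalently, Lemma~\ref{2055} followed by \eqref{2056}).
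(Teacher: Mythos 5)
Your proposal is correct and coincides with the paper's own proof: Corollary~\ref{2240} is obtained there precisely by taking $l=0$ in Corollary~\ref{35} (equivalently, by specializing Lemma~\ref{2055} and applying~\eqref{2056}), with the same implicit convention that the degenerate factor $\E|\conv(0,Y_1,\dots,Y_l)|^p$ equals $1$ when $l=0$. Your bookkeeping of the prefactor and the product range is accurate, so nothing further is needed.
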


Now we are ready to prove Corollary~\ref{937}.

By the simplex volume formula,
\begin{align*}
   |\conv(Y_0,Y_1,\dots,Y_d)|&= |\conv(0,Y_1-Y_0,Y_2-Y_0,\dots,Y_d-Y_0)|
   \\&=\frac1{d!}|\det[Y_1-Y_0,Y_2-Y_0,\dots,Y_d-Y_0].
\end{align*}
Substracting  the $(i-1)$-th column from the $i$-th one subsequently for all $i=d,\dots,2,$ except for $i=i_l,\dots,i_1$, we obtain
\begin{align*}
   |\conv(Y_0,\dots,Y_d)|=\frac1{d!}|\det A|,
\end{align*}
where $A\in\R^{d\times d}$ is a matrix whose columns with the indices $i_2,\dots,i_l$ equal $\sigma_{i_2}X_{i_2}-\sigma_{0}X_0,\dots,\sigma_{i_l}X_{i_l}-\sigma_{0}X_0$, and for the remaining indices, the $i$-th column equals  $\sigma_{i}X_i$: 
\begin{align*}
   A:=[&\sigma_{1}X_1,\dots,\sigma_{i_2-1}X_{i_2-1},\sigma_{i_2}X_{i_2}-\sigma_{0}X_0,\sigma_{i_2+1}X_{i_2+1},
   \\&\dots, \sigma_{i_l+1}X_{i_l+1},\sigma_{i_l}X_{i_l}-\sigma_{i_0}X_0,\sigma_{i_l+1}X_{i_l+1},\dots,\sigma_{i_d}X_d].
\end{align*}
Thus,
\begin{align*}
   |\conv(Y_0,\dots,Y_d)|=|\conv(&0,\sigma_{1}X_1,\dots,\sigma_{i_2-1}X_{i_2-1},\sigma_{i_2}X_{i_2}-\sigma_{0}X_0,\sigma_{i_2+1}X_{i_2+1},
   \\&\dots, \sigma_{i_l+1}X_{i_l+1},\sigma_{i_l}X_{i_l}-\sigma_{i_0}X_0,\sigma_{i_l+1}X_{i_l+1},\dots,\sigma_{i_d}X_d)|.
\end{align*}
Applying Corollary~\ref{35} gives
\begin{align*}
   \E|&\conv(Y_0,\dots,Y_d)|^p=\left[2^{(d-l+1)/2}\frac{(l-1)!}{d!}\right]^p\prod_{i=0}^{d-l}\frac{\Gamma\big(\frac{d-l+1-i+p}{2}\big)}{\Gamma\big(\frac{d-l+1-i}{2}\big)}
   \\&\times\prod_{i\ne 0,i_2,\dots,i_l}\sigma_i\cdot\E|\conv(0,\sigma_{i_2}X_{i_2}-\sigma_{0}X_0,\dots,\sigma_{i_l}X_{i_l}-\sigma_{0}X_0)|^p.
\end{align*}
Now Corollary~\ref{2240} gives
\begin{align*}
   \E|\conv&(0,\sigma_{i_2}X_{i_2}-\sigma_{0}X_0,\dots,\sigma_{i_l}X_{i_l}-\sigma_{0}X_0)|^p
   \\&=\E|\conv(\sigma_{0}X_0,\sigma_{i_2}X_{i_2},\dots,\sigma_{i_l}X_{i_l})|^p
   \\&=\left[\frac{2^{(l-1)/2}}{(l-1)!}\right]^p\prod_{i=1}^{l-1}\frac{\Gamma\big(\frac{d-l+1+i+p}{2}\big)}{\Gamma\big(\frac{d-l+1+i}{2}\big)}\sigma_0^p\sigma_{1}^p\dots\sigma_{d}^p\left(\frac{1}{\sigma_0^2}+\frac{1}{\sigma_{i_2}^2}+\dots+\frac{1}{\sigma_{i_l}^2}\right)^{p/2}.
\end{align*}
Combining the latter two inequalities finishes the proof.

\section{Acknowledgments}

\noindent This work was supported by the Foundation for the Advancement of Theoretical Physics and Mathematics ``BASIS''. 

The work of DZ  funded by RFBR and DFG according to the research project  20- 51-12004.

JRFs research at Columbia University was supported by the Alliance Program.

\bibliographystyle{plain}
\bibliography{bib2}
\end{document}